\documentclass[a4paper,11pt]{article}
\usepackage{graphicx}
\usepackage{hyperref}
\usepackage[T1]{fontenc}
\usepackage[utf8]{inputenc}
\usepackage[english]{babel}
\usepackage{amsthm,amssymb,amsfonts}
\usepackage{amsmath}
\usepackage{mathtools}
\usepackage{mathrsfs}
\usepackage{lmodern}
\usepackage{subcaption}
\usepackage{geometry}
\usepackage[boxed]{algorithm2e}
\usepackage{cite}

\usepackage{framed}
\usepackage{xcolor}
\definecolor{darkgreen}{rgb}{0.00,0.5,0.00}
\usepackage{hyperref}
\hypersetup{
    colorlinks=true,
    linkcolor=blue,
    citecolor=darkgreen,
    urlcolor=blue
}

\usepackage{lscape}
\usepackage{enumitem}
\usepackage{array}

\newtheorem{theorem}{Theorem}[section]
\theoremstyle{definition}
\newtheorem{lemma}[theorem]{Lemma}
\newtheorem{proposition}[theorem]{Proposition}
\newtheorem{remark}[theorem]{Remark}
\newtheorem{corollary}[theorem]{Corollary}
\newtheorem{definition}[theorem]{Definition}
\newtheorem{example}[theorem]{Example}

\newcommand{\R}{\mathbb R}

\newcommand{\E}{\mathbb E}
\newcommand{\N}{\mathbb N}

\newcommand{\setto}{\rightrightarrows}

\DeclareMathOperator{\proj}{proj}
\DeclareMathOperator{\epi}{epi}
\DeclareMathOperator{\dist}{dist}
\DeclareMathOperator{\dom}{dom}

\DeclareMathOperator{\conv}{co}

\DeclareMathOperator*{\argmin}{argmin}
\DeclareMathOperator*{\argmax}{argmax}

\usepackage{accents}

\newcommand{\rev}[1]{{#1}}

\newcommand{\Pp}{\mathbb{P}}
\newcommand{\B}{\mathbb{B}}
\newcommand{\Rex}{\overline{\R}}

\newcommand{\emp}{\emptyset}
\newcommand{\Intf}[1]{\mathtt{E}_{#1}}
\DeclareMathOperator{\cl}{cl}
\DeclareMathOperator{\inter}{int}
\DeclareMathOperator{\bd}{bd}
\DeclareMathOperator{\gph}{gph}

\title{Lower- and upper-$C^2$ integrands: a structured class of functions for stochastic nonconvex optimization}
\author{%
Felipe Atenas$^{1}$ \quad Alejandro Jofr\'e$^{1,2,*}$ \quad
Pedro P\'erez-Aros$^{1,2}$  \quad David Torregrosa-Bel\'en$^{3}$\\[0.6em]
\small $^{1}$Centro de Modelamiento Matem\'atico (CNRS IRL 2807), Universidad de Chile, Santiago, Chile\\
\small $^{2}$Departamento de Ingenier\'ia Matem\'atica, Universidad de Chile, Santiago, Chile\\
\small $^{3}$ Department of Mathematics, University of Alicante, Alicante, Spain\\
[0.4em]
\small \texttt{fatenas@cmm.uchile.cl}; \texttt{ajofre@uchile.cl};
\small \texttt{pperez@dim.uchile.cl};
\small \texttt{david.torregrosa@ua.es}\\
\small $^{*}$Corresponding author
}
\date{}

\begin{document}
\maketitle

\begin{abstract}

We study nonconvex stochastic functions that admit a structured second-order representation, a generalization of lower- and upper-$C^2$ functions to normal integrands. Similarly to the deterministic case, lower-$C^2$ normal integrands are defined as the pointwise maxima over a compact random index set of functions that are twice-continuously differentiable in the decision variable with an integrable bounded Hessian. Our main result is a local value function characterization of this class of functions, namely, that they satisfy a local sufficient ascent property almost surely. Equivalently, they differ from a convex normal integrand by a random quadratic.
The upper-$C^2$ theory follows by a change of sign, and delivers the corresponding lower envelopes, the form in which these integrands appear as second-stage value functions of stochastic programs with compact recourse. We also show that for this class integrability of the function values, of the local Lipschitz moduli and of the minimum-norm Clarke subgradients are equivalent. We then study smooth integrands, finite lower envelopes, difference-of-convex integrands, Moreau and forward--backward envelopes, compositions with a smooth map, monotone convex compositions, and compact-recourse stochastic programs within these classes of nonconvex stochastic functions. As an application, we show that the optimistic loss of partial-label regression and the variational formulation of the conditional value-at-risk fit within this framework.
\end{abstract}

\noindent\textbf{Keywords:} Normal integrands, Upper-$C^2$ functions, Lower-$C^2$ functions, Clarke subdifferential, Stochastic programming, Value functions, Measurable multifunctions

\noindent\textbf{MSC Classification:} 49J52, 49J53, 90C15, 90C30

\section{Introduction}\label{sec:introduction}

Many stochastic optimization models \rev{involve} expected-value functionals of the form
\begin{equation}\label{eq:expected_objective_intro}
	x\mapsto \Intf{\varphi}(x):=\int_\Xi \varphi(\xi,x)\,\Pp(d\xi),
\end{equation}
where $(\Xi,\mathcal F,\Pp)$ is a probability space and $\varphi$ is a normal integrand. Such functionals play a central role in stochastic programming, variational analysis, and sample-average approximation; see, e.g., \cite{MR4362585,rockafellarwets1998,MR4436019}. A persistent difficulty is that the sample functions $\varphi_\xi:=\varphi(\xi,\cdot)$ may fail to be smooth even when the underlying model data are smooth. This phenomenon arises naturally in two-stage and recourse models: the first-stage objective is defined through the optimal value of the second-stage problem, and such value functions are typically nondifferentiable in the first-stage variable.

The purpose of this work is to formulate and study  structured classes of nonconvex functions at the level of normal integrands. In particular, we focus on stochastic versions of lower- and upper-$C^2$ functions (see, e.g., \cite[Definition 10.29]{rockafellarwets1998}) which enjoy a favorable  second-order structure.   More precisely,  a locally Lipschitz function is \rev{lower-$C^2$} if, locally, it can be represented as a maximum of $C^2$ functions. 
The \rev{upper-$C^2$} property is its natural counterpart, obtained by changing sign. 
The classes of lower- and upper-$C^2$ functions play a prominent role in nonsmooth and variational analysis: convex functions provide basic examples of lower-$C^2$ functions, whereas lower envelopes and value functions obtained through minimization typically lead to upper-$C^2$ behavior; we refer the reader to \cite{clarke1983optimization,MR1363364,rockafellarwets1998,ThibaultBook1,ThibaultBook2} for more details. Beyond their theoretical significance, these classes are also relevant to practical optimization problems and to the design and convergence analysis of numerical algorithms. The benign form of nonconvexity of the family of lower-$C^2$ functions, also known as locally \emph{weakly convex} functions, has been exploited to design suitable proximal algorithms (see, e.g., \cite{MR650442,MR2583885,MR2678400,MR3961352,atenas2023unified}). In contrast, the upper-$C^2$ structure has received comparatively less attention in the development of new algorithms (see, e.g., \cite{aragon2025coderivative,aragonartacho2025nonmonotonesubgradientmethodsbased,aragonartacho2023boosted}). Related developments also include Frank–Wolfe methods for the broader class of upper-$C^{1,\alpha}$ functions \cite{deoliveira2023note}.

We define the lower- and upper-$C^2$ classes of normal integrands as the functions that can be obtained by optimizing, over a compact random index set, an objective that is twice-continuously differentiable with respect to the decision variable and whose Hessian is dominated by an integrable random modulus (see Definition~\ref{def:upperC2normal}). In the lower-$C^2$ case, this holds via maximization, and, after the change of sign, via minimization in the upper-$C^2$ case. This formulation is useful in applications because it reduces the verification of lower- or upper-$C^2$ regularity to standard hypotheses: compactness of the index (recourse) set, Carath\'eodory regularity, and an integrable second-order bound. In this way, the result separates the geometric content of the lower- and upper-$C^2$ property from the measurability issues handled by the theory of normal integrands and measurable multifunctions.

Our main contribution is a local ``ascent lemma'' (resp.   ``descent lemma'') for lower-$C^2$ (resp. upper-$C^2$) normal integrands. Moreover, we also prove that a normal integrand $\varphi_\xi$ is lower-$C^2$ around a reference point with integrable curvature if and only if  
it is locally characterized by a stochastic upper envelope of quadratic-affine functions with an integrable quadratic coefficient.  The mirror result states that  
  lower envelopes of stochastic quadratic-affine functions are the upper-$C^2$ normal integrands. In particular, the latter have the form of the integrands that \rev{appear} as second-stage value functions in stochastic programming.

In the stochastic setting, the \rev{quadratic}  local  curvature modulus   of lower- and upper-$C^2$ functions  may \rev{naturally} depend on the random parameter $\xi$, \rev{an} essential dependence in applications. We therefore require \rev{it} to belong to an appropriate space $L^p_\Pp$.  We show that  this integrability requirement is mild enough to cover random value function constructions, but strong enough to transfer samplewise estimates to the expected functional \eqref{eq:expected_objective_intro}. In particular, it provides a convenient bridge between the local geometry of the sample functions and the analytic properties of the expected objective.

The paper also records how the integrability hypotheses interact with one another. For lower- and upper-$C^2$ normal integrands we show that $p$-integrability of the values on a neighborhood of a point, $p$-integrability of the local Lipschitz modulus, and $p$-integrability of the minimum-norm Clarke subgradient are equivalent, and that each of them needs to be checked at a single point only. We further relate these conditions to integrability assumptions commonly used in smooth stochastic optimization, such as bounded-variance and multiplicative-noise estimates.

The paper is organized as follows. Section~\ref{sec:preliminaries} fixes notation and recalls the required tools from normal-integrand theory and variational analysis. Section~\ref{sec:main_results}  presents equivalent characterizations of lower- and upper-$C^2$ normal integrands. Section~\ref{sec:examples_classes} collects important general classes of such integrands, including difference-of-convex functions, Moreau and forward--backward envelopes, and composite functions. Section~\ref{sec:lipschitz_moduli} presents the relationship between integrability of the values, of the Lipschitz moduli, and of the minimum-norm Clarke subgradients of lower- and upper-$C^2$ normal integrands. In Section~\ref{sec:app}, we describe two example applications that belong to the families of lower- and upper-$C^2$ normal integrands. 

\section{Notation and preliminaries}\label{sec:preliminaries}

Throughout the paper, $(\Xi,\mathcal F,\Pp)$ denotes a complete probability space. All assertions involving the parameter $\xi$ are understood to hold for $\Pp$-almost every $\xi\in\Xi$. After modifying the corresponding objects on a null set if necessary, we shall simply write ``for all $\xi\in\Xi$''. The space $\R^d$ is endowed with the Euclidean inner product $\langle\cdot,\cdot\rangle$ and the induced norm $\|\cdot\|$. We denote by $\B_r(x)$ and $\B_r[x]$ the open and closed balls of radius $r>0$ centered at $x$, respectively, and set $\B:=\B_1[0]$. We denote the extended real line by $\Rex:=[-\infty,+\infty]$. 

We say that a function $f:\R^d\to\R$ is $C^1$ if it is \emph{continuously differentiable}, that is, differentiable with a continuous gradient, and that it is $C^2$ if it is twice-continuously differentiable. 

For $A\subset\R^d$, the symbols $\cl A$, $\inter A$, $\bd A$, $\conv A$, and $\dist(x,A)$ denote, respectively, the closure, interior, boundary, convex hull, and distance from $x$ to $A$. For an extended-real-valued function $f:\R^d\to\Rex$, we denote the domain of $f$ by $\dom f:=\{x\in\R^d:f(x)<+\infty\}$ and the epigraph of $f$ by $\epi f:=\{(x,\alpha)\in\R^d\times\R:f(x)\leq \alpha\}$. For a set-valued mapping $F:X\setto Y$, its graph is denoted by $\gph F:=\{(x,y)\in X\times Y:y\in F(x)\}$. For a set $K\subseteq\R^d$, the \emph{indicator function} $\iota_K:\R^d\to\Rex$ is defined by $\iota_K(x):=0$ if $x\in K$ and $\iota_K(x):=+\infty$ otherwise.

 For a proper function $f:\R^d\to\Rex$ and $\gamma>0$, the \emph{Moreau envelope} of $f$ with parameter $\gamma$ is
\[
e_\gamma f(x):=\inf_{z\in\R^d}\Big\{f(z)+\frac{1}{2\gamma}\|z-x\|^2\Big\},
\]
and $f$ is said to be \emph{prox-bounded with threshold $\gamma_f$} if $e_\gamma f>-\infty$ for every $\gamma\in(0,\gamma_f)$.

\subsection{Normal integrands and measurable multifunctions}

For $p\in[1,\infty]$, the notation $L^p_\Pp$ refers to the usual Lebesgue space with respect to~$\Pp$. Let $X$ be a metric space. A set-valued mapping $C:\Xi\setto X$ is called \emph{measurable} if, for every open set $U\subset X$, $\{\xi\in\Xi:C(\xi)\cap U\neq\emp\}\in\mathcal F$. When $X$ is a Polish space and $C$ has nonempty compact values, this is the standard measurability notion used for random closed sets and measurable selection arguments.

A mapping $a:\Xi\times X\to\R^m$ is called a \emph{Carath\'eodory mapping} if $a(\cdot,t)$ is measurable for every $t\in X$ and $a(\xi,\cdot)$ is continuous for every $\xi\in\Xi$. Similarly, a mapping $g:\Xi\times X\times\R^d\to\R$ is Carath\'eodory if $g(\cdot,t,x)$ is measurable for every $(t,x) \in X\times\R^d$ and $g(\xi,\cdot,\cdot)$ is continuous for every $\xi\in\Xi$.

 Given an integrand $\varphi:\Xi\times\R^d\to\Rex$, we use the shorthand
\[
\varphi_\xi(x):=\varphi(\xi,x),
\qquad
\Intf{\varphi}(x):=\int_\Xi \varphi_\xi(x)\,\Pp(d\xi),
\]
whenever the integral is well-defined. An integrand $\varphi:\Xi\times\R^d\to\Rex$ is called a \emph{normal integrand} if the epigraphical mapping $\xi\mapsto \epi \varphi_\xi$ is measurable and has closed values. All normal integrands considered here are proper, in the sense that $\varphi_\xi>-\infty$ on $\R^d$ and $\dom\varphi_\xi\neq\emp$ for every $\xi\in\Xi$. Moreover, $\varphi$ is a \emph{convex (concave) normal integrand} if, in addition, $\varphi_\xi$ is convex (concave) for $\Pp$-almost every $\xi\in\Xi$.

We shall use the following consequence of measurable maximum theorems. If $X$ is Polish, $C:\Xi\setto X$ is measurable with nonempty compact values, and $g:\Xi\times X\times\R^d\to\R$ is Carath\'eodory, then the value function
\[
\varphi_\xi(x):=\min_{t\in C(\xi)} g(\xi,t,x)
\]
is well-defined, the minimum is attained, and the resulting mapping $\varphi:\Xi\times\R^d\to\R$ is a normal integrand (see, e.g., \cite[Chapter 14]{rockafellarwets1998}).  

\subsection{Clarke subdifferential for locally Lipschitz functions}

We adopt generalized differentiation for locally Lipschitz functions; see \cite{clarke1983optimization}. Let $f:\R^d\to\R$ be locally Lipschitz continuous around a given $x\in\R^d$. The \emph{Clarke generalized directional derivative} of $f$ at $x$ in the direction $u\in\R^d$ is
\[
f^\circ(x;u):=
\limsup_{\substack{y\to x\\ t\downarrow 0}}
\frac{f(y+tu)-f(y)}{t},
\]
and the \emph{Clarke subdifferential} of $f$ at $x$ is
\[
\partial f(x):=
\left\{v\in\R^d:
\langle v,u\rangle\leq f^\circ(x;u)
\text{ for all } u\in\R^d
\right\}.
\]
Throughout the paper, whenever $f$ is locally Lipschitz, the notation $\partial f(x)$ refers to the Clarke subdifferential. If $f$ is locally Lipschitz around~$x$, then $\partial f(x)$ is a nonempty, compact, and convex subset of $\R^d$. Moreover, if $f$ is continuously differentiable around~$x$, then $\partial f(x)=\{\nabla f(x)\}$, while if $f$ is convex finite-valued (and thus locally Lipschitz continuous), the Clarke subdifferential coincides with the usual convex subdifferential.

Moreover, a locally Lipschitz continuous function $f:\R^d\to\R$ is said to be
\emph{Clarke regular at $x$} if, for every direction $u\in\R^d$, the one-sided
directional derivative
\[
f'(x;u):=\lim_{t\downarrow 0}\frac{f(x+tu)-f(x)}{t}
\]
exists and satisfies
\[
f'(x;u)=f^\circ(x;u).
\]
The function $f$ is Clarke regular on a set $D$ if it is Clarke regular at every
point of~$D$. Continuously differentiable functions and finite convex functions
are Clarke regular. Notice, however, that Clarke regularity is not invariant
under multiplication by $-1$. For example, the concave function $x\mapsto -|x|$
is not Clarke regular at the origin. The importance of Clarke regularity in our work is twofold: on one hand, it establishes the equivalence among the Fréchet, limiting (Mordukhovich), and Clarke subdifferentials. On the other hand, it yields tight calculus rules that hold with equality.

	The following result is well-known. 
	\begin{lemma}\label{lem:smooth_descent_estimate}
		Let $V\subset\R^d$ be convex, let $h:V\to\R$ be continuously differentiable, and let $\kappa\geq 0$ be such that
		\begin{equation}\label{eq:grad_lip}
			\|\nabla h(z)-\nabla h(w)\|\leq \kappa\|z-w\|
			\qquad\text{for all } z,w\in V.
		\end{equation}
		Then, for all $x,y\in V$,
		\begin{equation}\label{eq:two_sided_descent}
			\big|h(y)-h(x)-\langle \nabla h(x),y-x\rangle\big|
			\leq \frac{\kappa}{2}\|y-x\|^2.
		\end{equation}
		In particular, \eqref{eq:grad_lip} holds whenever $h$ is twice-continuously differentiable on $V$ with $\|\nabla^2 h(z)\|\leq\kappa$ for all $z\in V$.
	\end{lemma}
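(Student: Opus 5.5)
The plan is to reduce everything to a one-dimensional integral along the segment joining $x$ and $y$. Since $V$ is convex, the whole segment $x+t(y-x)$, $t\in[0,1]$, stays in $V$. Define $g(t):=h(x+t(y-x))$. Because $h$ is $C^1$ on $V$, $g$ is $C^1$ on $[0,1]$ with $g'(t)=\langle\nabla h(x+t(y-x)),y-x\rangle$. The fundamental theorem of calculus then gives
\[
h(y)-h(x)-\langle\nabla h(x),y-x\rangle=\int_0^1\big\langle \nabla h(x+t(y-x))-\nabla h(x),\,y-x\big\rangle\,dt .
\]

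Next I bound the integrand. By Cauchy--Schwarz and \eqref{eq:grad_lip} applied with $z=x+t(y-x)$ and $w=x$, its absolute value is at most $\kappa t\|y-x\|^2$. Taking absolute values inside the integral and using $\int_0^1 t\,dt=1/2$ yields \eqref{eq:two_sided_descent}. Nothing here is delicate; the only point to watch is that convexity of $V$ is exactly what keeps the segment inside the set where $h$ and \eqref{eq:grad_lip} are available.

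For the final claim, assume $h$ is $C^2$ on $V$ with $\|\nabla^2 h\|\le\kappa$. Apply the same segment argument to the gradient map: for $z,w\in V$,
\[
\nabla h(z)-\nabla h(w)=\int_0^1\nabla^2 h(w+t(z-w))(z-w)\,dt .
\]
Estimating the norm under the integral by the operator-norm bound gives \eqref{eq:grad_lip}.

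If $V$ is not open, differentiability at boundary points should be understood as the restriction of a $C^1$ (respectively $C^2$) map defined on a neighbourhood, or else one-sided along the segment. I take this as the intended meaning, so it is a matter of interpretation rather than a real obstacle.
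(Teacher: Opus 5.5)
The paper gives no proof of this lemma and simply records it as well known. Your argument is the standard proof and is correct: you restrict $h$ to the segment $[x,y]\subseteq V$, write the remainder via the fundamental theorem of calculus, bound the integrand by $\kappa t\|y-x\|^2$, and obtain the Hessian claim from the same segment integral applied to $\nabla h$. Your reading of differentiability at boundary points, as the restriction of a map that is $C^1$ (resp.\ $C^2$) on an open set containing $V$, matches how the paper applies the lemma.
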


\subsection{Local Lipschitz moduli and lower- and upper-$C^2$ integrands}

\begin{definition}[Locally Lipschitz integrand with integrable modulus]\label{def:locLip_integrable}
	Let $p\in[1,\infty]$. A normal integrand $\varphi:\Xi\times\R^d\to\Rex$ is \emph{locally Lipschitz around $\bar x$ with $p$-integrable modulus} if there exist a neighborhood $W$ of $\bar x$ and a nonnegative $\kappa\in L^p_\Pp$ such that, for $\Pp$-almost every $\xi\in\Xi$, the function $\varphi_\xi$ is finite on $W$ and
	\begin{equation}\label{eq:LLmodulus}
		|\varphi_\xi(y)-\varphi_\xi(z)|
		\leq \kappa(\xi)\|y-z\|
		\quad
		\text{for all } y,z\in W .
	\end{equation}
	When \eqref{eq:LLmodulus} holds for some measurable $\kappa$, not necessarily integrable, we simply say that $\varphi$ is \emph{locally Lipschitz around $\bar x$ (with measurable modulus)}.  We shall say that these definitions hold on a set $U\subseteq\R^d$ if they hold around every point $\bar x\in U$.
\end{definition}

Next, we recall that the local Lipschitz modulus of a convex function is controlled by a bound on its values. The following result is the corresponding statement for convex normal integrands, with the bound and the modulus required to be $p$-integrable.

\begin{proposition}\label{prop:convex_integrable_values}
	Let $\varphi:\Xi\times\R^d\to\Rex$ be a convex normal integrand. Let $p\in[1,\infty]$, let $\bar x\in\R^d$, and suppose that there exists a convex neighborhood $V$ of $\bar x$ with $\varphi(\cdot,x)\in L^p_\Pp$ for all $x\in V$. Then $\varphi$ is locally Lipschitz around $\bar x$ with $p$-integrable modulus. 
\end{proposition}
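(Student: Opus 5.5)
The plan is to reduce the statement to the classical deterministic estimate for convex functions. That estimate controls the Lipschitz constant on a small ball by the oscillation of the function on a slightly larger ball. The oscillation is then bounded by finitely many values at fixed points, which are $p$-integrable by hypothesis.

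\emph{Step 1: fix a polytope.} Choose $r>0$ with $\B_{2r}[\bar x]\subset V$. Let $P:=\conv\{a_1,\dots,a_m\}$ be a polytope with $\B_{2r}[\bar x]\subset P\subset V$; for instance, take a small cube around $\bar x$ whose $m=2^d$ vertices lie in $V$, using that $V$ is convex and a neighborhood of $\bar x$.

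\emph{Step 2: upper bound.} Since the $a_i$ lie in $V$, $\varphi_\xi(a_i)$ is finite for a.e.\ $\xi$ (we discard a null set). Convexity gives, for every $x\in P$,
\[
\varphi_\xi(x)\le M(\xi):=\max_{i}|\varphi_\xi(a_i)|\le \sum_{i=1}^m|\varphi(\xi,a_i)|,
\]
and the right-hand side belongs to $L^p_\Pp$.

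\emph{Step 3: lower bound.} For $x\in\B_{2r}[\bar x]$, the reflected point $2\bar x-x$ also lies in $\B_{2r}[\bar x]$. Convexity then gives
\[
\varphi_\xi(\bar x)\le \tfrac12\varphi_\xi(x)+\tfrac12\varphi_\xi(2\bar x-x),
\]
so that
\[
\varphi_\xi(x)\ge 2\varphi_\xi(\bar x)-M(\xi)\ge -2|\varphi_\xi(\bar x)|-M(\xi).
\]
Consequently $\sup_{\B_{2r}[\bar x]}|\varphi_\xi|\le N(\xi):=2|\varphi(\xi,\bar x)|+2M(\xi)\in L^p_\Pp$. In particular $\varphi_\xi$ is finite on $W:=\B_r(\bar x)$.

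\emph{Step 4: Lipschitz estimate.} Take $y\neq z$ in $\B_r[\bar x]$ and set $w:=y+r\frac{y-z}{\|y-z\|}$, which lies in $\B_{2r}[\bar x]$. Then $y$ is the convex combination
\[
y=\tfrac{r}{r+\|y-z\|}\,z+\tfrac{\|y-z\|}{r+\|y-z\|}\,w.
\]
Convexity yields
\[
\varphi_\xi(y)-\varphi_\xi(z)\le \tfrac{\|y-z\|}{r+\|y-z\|}\bigl(\varphi_\xi(w)-\varphi_\xi(z)\bigr)\le \tfrac{2N(\xi)}{r}\|y-z\|.
\]
Exchanging the roles of $y$ and $z$ gives the symmetric bound. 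Hence \eqref{eq:LLmodulus} holds with $\kappa(\xi):=2N(\xi)/r$.

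\emph{Step 5: measurability and integrability.} $\kappa$ is measurable because it is a finite sum of the measurable functions $\xi\mapsto|\varphi(\xi,a_i)|$ and $|\varphi(\xi,\bar x)|$. These are measurable since $\varphi$ is a normal integrand, and normal integrands are jointly measurable, so $\varphi(\cdot,x)$ is measurable for each fixed $x$. Finally $\kappa\in L^p_\Pp$ for every $p\in[1,\infty]$, since it is a finite sum of $L^p_\Pp$ functions.

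The main point of care is Steps 2–3: the bound must depend only on finitely many fixed points, so that no measurability or integrability issue arises from a supremum over uncountably many $x$. Beyond that, the argument is routine.
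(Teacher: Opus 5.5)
Your proposal is correct and takes essentially the paper's route: you bound $\varphi_\xi$ from above on a polytope inside $V$ by its finitely many vertex values (a cube where the paper uses a simplex), and then convert this into a Lipschitz bound on $\B_r(\bar x)$ via the classical convex estimate, which you prove by hand (reflection through $\bar x$ plus a convex-combination argument) where the paper simply cites \cite[Corollary~2.2.12]{zbMATH01807400}. The only slip is the order of choices in Step~1: choose the cube inside $V$ first and then take $r$ small enough that $\B_{2r}[\bar x]$ lies in it, because for an arbitrary $r$ with $\B_{2r}[\bar x]\subset V$ (e.g.\ $V=\B_{2r}[\bar x]$) no polytope need fit in between.
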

\begin{proof}
	Since $\bar x\in\inter V$, we may pick affinely independent points $q_0,\ldots,q_d\in V$ and $r>0$ such that $\B_{2r}[\bar x]\subseteq\conv\{q_i:i=0,\ldots,d\}$, and set
	\[
	\mathfrak{m}(\xi):=\max_{i=0,\dots,d}|\varphi_\xi(q_i)|+|\varphi_\xi(\bar x)| ,
	\]
	which is $p$-integrable because $\varphi(\cdot,q_i)\in L^p_\Pp$ for $i=0,\ldots,d$ and $\varphi(\cdot,\bar x)\in L^p_\Pp$. Let $N\in\mathcal F$ be the $\Pp$-null set of those $\xi$ for which $\mathfrak{m}(\xi)=+\infty$, and fix $\xi\in\Xi\setminus N$. Since $\varphi_\xi$ is convex, its supremum over the simplex $\conv\{q_i:i=0,\ldots,d\}$ is attained at a vertex, and thus
	\begin{align}\label{eq:kappa_convex}
		\varphi_\xi(x)\leq\max_{i=0,\dots,d}\varphi_\xi(q_i)\leq\mathfrak{m}(\xi)+\varphi_\xi(\bar x)
		\qquad\text{for all } x\in\B_{2r}[\bar x] ;
	\end{align}
	that is, $\varphi_\xi$ is bounded above on $\B_{2r}[\bar x]$ by $\varphi_\xi(\bar x)+\mathfrak{m}(\xi)$. Applying \cite[Corollary~2.2.12]{zbMATH01807400} on the ball $\B_{2r}[\bar x]$, with excess $\mathfrak{m}(\xi)$ of this upper bound over the value $\varphi_\xi(\bar x)$ at the center, we obtain
	\[
	|\varphi_\xi(x)-\varphi_\xi(y)|\leq\frac{3}{2r}\,\mathfrak{m}(\xi)\|x-y\|
	\qquad\text{for all }x,y\in\B_{r}(\bar x),
	\]
	which ends the proof.
\end{proof}

For an integrand $\varphi:\Xi\times\R^d\to\R$ that is locally Lipschitz around $x$, we write
\[
\partial \varphi_\xi(x):=\partial(\varphi_\xi)(x).
\]

We next define the classes of normal integrands of interest in this work.

\begin{definition}[Lower- and upper-$C^2$ normal integrands]\label{def:upperC2normal}
	Let $\varphi:\Xi\times\R^d\to\Rex$ be a  normal integrand, let $\bar x\in\R^d$, and let $p\in[1,\infty]$. We say that $\varphi$ is a \emph{lower-$C^2$ normal integrand around $\bar x$ with $p$-integrable (curvature) modulus} if there exist a compact convex neighborhood $V$ of $\bar x$, a nonnegative function $\varrho\in L^p_\Pp$, a Polish space $X$, a measurable set-valued mapping $C:\Xi\setto X$ with nonempty compact values, and a Carath\'eodory mapping $g:\Xi\times X\times\R^d\to\R$ such that, for $\Pp$-almost every $\xi\in\Xi$,
	\begin{equation}\label{eq:smooth_value_representation_lower}
		\varphi_\xi(x)=\max_{t\in C(\xi)} g(\xi,t,x)
		\qquad\text{for all }x\in V,
	\end{equation}
	where, for each $\xi\in\Xi$ and $t\in X$, the function $x\mapsto g(\xi,t,x)$ is twice-continuously differentiable on an open set containing $V$, the mappings $g$, $\nabla_x g$, and $\nabla_x^2 g$ are continuous in $(t,x)$, and 
	\begin{equation}\label{eq:hessian_bound_g_lower}
		\|\nabla_x^2g(\xi,t,x)\|\leq \varrho(\xi)
		\quad
		\text{for all } \xi\in\Xi,\; t\in C(\xi),\; x\in V.
	\end{equation}
	If \eqref{eq:smooth_value_representation_lower}--\eqref{eq:hessian_bound_g_lower} hold for some nonnegative measurable $\varrho$, not necessarily integrable, we simply say that $\varphi$ is a \emph{lower-$C^2$ normal integrand around $\bar x$}.
	
	\rev{Similarly}, $\varphi$ is an \emph{upper-$C^2$ normal integrand around $\bar x$} (with $p$-integrable modulus) if $-\varphi$ is a lower-$C^2$ normal integrand around $\bar x$ (with $p$-integrable modulus). More precisely, this amounts to a representation
	\begin{equation}\label{eq:smooth_value_representation}
		\varphi_\xi(x)=\min_{t\in C(\xi)} g(\xi,t,x)
		\qquad\text{for all }x\in V,
	\end{equation}
	with $C$, $g$ and $\varrho$ as above.
\end{definition}

\section{Characterizations of lower- and upper-$C^2$ normal integrands}\label{sec:main_results}

We next establish a characterization of lower-$C^2$ normal integrands as local stochastic value functions, from which the upper-$C^2$ case follows by a change of sign. The first representation uses quadratic-affine functions, while the second uses smooth functions with an integrable Hessian bound.

\begin{theorem}[Equivalent characterizations of lower-$C^2$ normal integrands]\label{thm:eqC2lower}
	Let $\varphi:\Xi\times\R^d\to\R$ be a finite-valued normal integrand, let $\bar x\in\R^d$, and let $p\in[1,\infty]$. Then the following assertions are equivalent:
	\begin{enumerate}[label=(\alph*)]
		\item\label{it:eqC2l-a} $\varphi$ is a lower-$C^2$ normal integrand around $\bar x$ with $L^p_\Pp$ curvature modulus, i.e., Definition~\ref{def:upperC2normal} holds.
		
		\item\label{it:eqC2l-b} There exist a compact convex neighborhood $V$ of $\bar x$ and a nonnegative function $\varrho\in L^p_\Pp$ such that, for $\Pp$-almost every $\xi\in\Xi$, the function $\varphi_\xi$ is locally Lipschitz on a neighborhood of $V$ and
		\begin{equation}\label{eq:lowerC2_ascent}
			\varphi_\xi(y)
			\geq
			\varphi_\xi(x)+\langle v,y-x\rangle
			-\frac{\varrho(\xi)}{2}\|y-x\|^2
			\quad
			\text{for all } x,y\in V \text{ and all } v\in\partial\varphi_\xi(x).
		\end{equation}

		\item\label{it:eqC2l-c} There exist a compact convex neighborhood $V$ of $\bar x$, a nonnegative function $\varrho\in L^p_\Pp$, a Polish space $X$, a measurable set-valued mapping $C:\Xi\setto X$ with nonempty compact values, and Carath\'eodory mappings $a:\Xi\times X\to\R^d$ and $b:\Xi\times X\to\R$ such that, for $\Pp$-almost every $\xi\in\Xi$ and all $x\in V$,
		\begin{equation}\label{eq:quadratic_affine_representation_lower}
			\varphi_\xi(x)
			=
			\max_{t\in C(\xi)}
			\left\{
			-\frac{\varrho(\xi)}{2}\|x\|^2
			+\langle a(\xi,t),x\rangle
			+b(\xi,t)
			\right\}.
		\end{equation}
		\item\label{it:eqC2l-d} There exist a compact convex neighborhood $V$ of $\bar x$, a nonnegative function $\varrho\in L^p_\Pp$, and a convex normal integrand $f:\Xi\times\R^d\to\R$ such that, for $\Pp$-almost every $\xi\in\Xi$,
		\begin{equation}\label{eq:convex_minus_quadratic}
			\varphi_\xi(x)=f_\xi(x)-\frac{\varrho(\xi)}{2}\|x\|^2
			\qquad\text{for all } x\in V .
		\end{equation}
	\end{enumerate}
	Moreover, the modulus $\varrho$ can be taken to be the same in all items \ref{it:eqC2l-a}--\ref{it:eqC2l-d}.
\end{theorem}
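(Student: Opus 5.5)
I will prove the cycle (a)$\Rightarrow$(b)$\Rightarrow$(d)$\Rightarrow$(c)$\Rightarrow$(a), keeping the same $V$ and $\varrho$ throughout. The one exception is (b)$\Rightarrow$(d), where the construction is carried out on $V$ itself and then extended.

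For (a)$\Rightarrow$(b), fix $\xi$ and $x\in V$. The derivatives $\nabla_x g$ and $\nabla_x^2 g$ are continuous in $(t,x)$ and $C(\xi)$ is compact. Hence $\varphi_\xi$ is locally Lipschitz near $V$, and it is Clarke regular there. By the Danskin/Clarke max-rule,
\[
\partial\varphi_\xi(x)=\conv\{\nabla_x g(\xi,t,x): t\in T(x)\},
\]
where $T(x)$ is the set of active indices in $C(\xi)$. For each active $t$, Lemma~\ref{lem:smooth_descent_estimate} together with \eqref{eq:hessian_bound_g_lower} gives
\[
\varphi_\xi(y)\ge g(\xi,t,y)\ge \varphi_\xi(x)+\langle\nabla_x g(\xi,t,x),y-x\rangle-\tfrac{\varrho(\xi)}2\|y-x\|^2 .
\]
The right-hand side is affine in the gradient, so the inequality passes to convex combinations and yields \eqref{eq:lowerC2_ascent}.

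For (b)$\Rightarrow$(d), set $h_\xi:=\varphi_\xi+\frac{\varrho(\xi)}2\|\cdot\|^2$. Since $\partial h_\xi(x)=\partial\varphi_\xi(x)+\varrho(\xi)x$, a direct expansion of the squares turns \eqref{eq:lowerC2_ascent} into the plain subgradient inequality
\[
h_\xi(y)\ge h_\xi(x)+\langle w,y-x\rangle \quad\text{for all } w\in\partial h_\xi(x),\ x,y\in V .
\]
The convex function is then defined as
\[
f_\xi(y):=\sup_{x\in V,\;w\in\partial h_\xi(x)}\{h_\xi(x)+\langle w,y-x\rangle\}.
\]
This is convex, and it equals $h_\xi$ on $V$ because $\partial h_\xi(y)\neq\emp$. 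Finiteness on all of $\R^d$ follows because the subgradients are bounded over compact $V$ by local Lipschitzness. Measurability is the delicate point. Restrict the supremum to $x$ in a countable dense subset $D$ of $V$ and to $w$ ranging over a Castaing representation of the measurable mapping $\xi\mapsto\partial h_\xi(x)$. Measurability of that mapping follows since $\xi\mapsto h_\xi^\circ(x;u)$ is a limsup over rational $y,s$ of measurable functions. The restricted sup is still $\ge h_\xi$ on $D$. By continuity of the supremum (a finite convex function) and of $h_\xi$, it is $\ge h_\xi$ on $V$, and it is $\le h_\xi$ on $V$ by the inequality above. So we obtain a countable sup of Carathéodory affine functions, which is a convex normal integrand.

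For (d)$\Rightarrow$(c), $f_\xi$ is convex and finite, so on $V$ it equals the max of its affine minorants $f_\xi(z)+\langle w,x-z\rangle$ over $(z,w)\in\gph\partial f_\xi\cap(V\times\R^d)$. This set is compact, because subgradients of $f_\xi$ are bounded on compact $V$. Take $X=\R^d\times\R^d$ and $C(\xi):=\{(z,w): z\in V,\ w\in\partial f_\xi(z)\}$, which is measurable since $\gph\partial f_\xi$ is a measurable closed-valued mapping for a convex normal integrand. Put $a(\xi,(z,w)):=w$ and $b(\xi,(z,w)):=f_\xi(z)-\langle w,z\rangle$. These are Carathéodory because a finite convex normal integrand is Carathéodory. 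This gives \eqref{eq:quadratic_affine_representation_lower}.

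For (c)$\Rightarrow$(a), take $g(\xi,t,x):=-\frac{\varrho(\xi)}2\|x\|^2+\langle a(\xi,t),x\rangle+b(\xi,t)$. Its Hessian is $-\varrho(\xi)I$, with norm $\varrho(\xi)$, so \eqref{eq:hessian_bound_g_lower} holds with the same modulus.

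The main obstacle is the measurability in (b)$\Rightarrow$(d), including the compactness needed in (d)$\Rightarrow$(c). If the Castaing route becomes cumbersome, I would instead define $f_\xi$ as the convex hull (biconjugate) of $h_\xi+\iota_V$. Its epigraph is measurable by the standard closure rules for normal integrands under conjugation. I would then extend it to a finite function outside $V$ by a Lipschitz (Pasch–Hausdorff-type) convex extension.
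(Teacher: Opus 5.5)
Your proof is correct, but it organizes the middle of the argument differently from the paper.

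The paper proves (a)$\Rightarrow$(b)$\Rightarrow$(c)$\Rightarrow$(a), then (c)$\Rightarrow$(d)$\Rightarrow$(b). In (b)$\Rightarrow$(c) it takes the localized Clarke graph $C(\xi)=\gph\partial\varphi_\xi\cap(V\times\R^d)$ as the random index set, citing a measurability theorem for subgradient graphs of normal integrands. The convex integrand in (d) then comes for free from the measurable maximum theorem.

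Your $f_\xi$ is in fact the same supremum of affine minorants of $h_\xi=\varphi_\xi+\frac{\varrho(\xi)}{2}\|\cdot\|^2$; the paper's $a,b$ correspond to $w=v+\varrho(\xi)z\in\partial h_\xi(z)$. What differs is how you secure measurability. You restrict to a countable dense $D\subseteq V$ and to Castaing representations of $\xi\mapsto\partial h_\xi(x)$, $x\in D$. This yields a countable supremum of Carath\'eodory affine functions. It needs only measurability of the Clarke subdifferential at fixed points, not of the whole localized Clarke graph. You justify that from the limsup formula for $h^\circ_\xi(x;u)$ over rational $y,s$. It then follows by intersecting the half-spaces $\{w:\langle w,u\rangle\leq h^\circ_\xi(x;u)\}$ over rational $u$.

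The price is that you need (d)$\Rightarrow$(c) as a separate step. There you use the convex subgradient graph of $f_\xi$, whose measurability is classical for convex normal integrands. For instance, it is the zero-level set of the normal integrand $(z,w)\mapsto f_\xi(z)+f_\xi^*(w)-\langle w,z\rangle$.

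So the paper's route is shorter and produces (c) with the natural index set of subgradient pairs of $\varphi_\xi$. Yours is more self-contained measure-theoretically and dispenses with the paper's separate (d)$\Rightarrow$(b) step. Your fallback via biconjugates and a Lipschitz extension is not needed.

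One small correction in (a)$\Rightarrow$(b). The max-representation is assumed only on $V$, and $\varphi_\xi$ is arbitrary outside $V$. So you cannot claim that $\varphi_\xi$ is locally Lipschitz near $V$, nor apply the max-rule at boundary points of $V$. As the paper does, pass to a smaller compact convex neighborhood of $\bar x$ contained in $\inter V$. The Hessian bound still holds there, so $\varrho$ is unchanged, but your remark about keeping the same $V$ throughout should be adjusted accordingly.
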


\begin{proof}

We first show that \ref{it:eqC2l-a} implies \ref{it:eqC2l-b}.  Let $V$, $\varrho$, $X$, $C$ and $g$ be as in Definition~\ref{def:upperC2normal}. By \eqref{eq:smooth_value_representation_lower}, the compactness of $C(\xi)\times V$, and the continuity of $\nabla_x g(\xi,\cdot,\cdot)$, each $\varphi_\xi$ is Lipschitz on $V$; hence, shrinking $V$ if necessary, we may assume that each $\varphi_\xi$ is Lipschitz on a neighborhood of $V$. Let $\xi\in\Xi$ and $x,y\in V$, and define the set of active indices $C^*_x(\xi):=\argmax_{t\in C(\xi)}g(\xi,t,x)$, which is nonempty by compactness of $C(\xi)$ and continuity of $g(\xi,\cdot,x)$. In view of \eqref{eq:smooth_value_representation_lower} and the smoothness of $x\mapsto g(\xi,t,x)$ (see, e.g., \cite[Theorem 10.31]{rockafellarwets1998}), for any $v\in\partial\varphi_\xi(x)$ there exist $n\geq 1$, $t_i\in C^*_x(\xi)$ and $\lambda_i\geq 0$, $i=1,\dots,n$, such that $\sum_{i=1}^n\lambda_i=1$ and
	\begin{equation}\label{eq:subgrad-convex-comb-lower}
		v=\sum_{i=1}^n\lambda_i\nabla_x g(\xi,t_i,x).
	\end{equation}
	Furthermore, by \eqref{eq:hessian_bound_g_lower} and Lemma~\ref{lem:smooth_descent_estimate}, for all $i=1,\dots,n$,
	\begin{equation*}
		\varphi_\xi(y)\geq g(\xi,t_i,y)
		\geq
		g(\xi,t_i,x)+\langle\nabla_x g(\xi,t_i,x),y-x\rangle
		-\frac{\varrho(\xi)}{2}\|y-x\|^2.
	\end{equation*}
	Multiplying by $\lambda_i$ and summing over $i=1,\dots,n$ yields
	\begin{align*}
		\varphi_\xi(y)
		&\geq
		\sum_{i=1}^n \lambda_i g(\xi,t_i,x)+\left\langle\sum_{i=1}^n \lambda_i\nabla_x g(\xi,t_i,x),y-x\right\rangle
		-\frac{\varrho(\xi)}{2}\|y-x\|^2\\
		&=\varphi_\xi(x)+\langle v,y-x\rangle-\frac{\varrho(\xi)}{2}\|y-x\|^2,
	\end{align*}
	where in the last line we used that $t_i\in C^*_x(\xi)$, so that $g(\xi,t_i,x)=\varphi_\xi(x)$, together with \eqref{eq:subgrad-convex-comb-lower}. This is \eqref{eq:lowerC2_ascent}.
	
	Next, we prove \ref{it:eqC2l-b} implies \ref{it:eqC2l-c}.
	Let $V$ and $\varrho\in L^p_\Pp$ be as in \ref{it:eqC2l-b}. Set $X:=V\times\R^d$. For each fixed $\xi$, the set \begin{equation}\label{eq:C_subdiff_graph_lower}
		C(\xi):=\gph\partial\varphi_\xi\cap X
	\end{equation}
	is nonempty and compact.  Indeed, $C(\xi)$ is closed due to closedness of $V$ and Lipschitz continuity of $\varphi_\xi$   on a neighborhood of $V$ , which guarantees closedness of the Clarke subdifferential  at the points of $V$  \cite[Proposition~2.1.5(b)]{clarke1983optimization}. Furthermore, $C(\xi)$ is a subset of a compact set. Let $U\supseteq V$ be an open set on which $\varphi_\xi$ is locally Lipschitz. For each $x\in V$, there exists an open ball $B_x\subseteq U$ on which $\varphi_\xi$ is Lipschitz with some constant $K_x$, and by compactness $V\subseteq B_{x_1}\cup\dots\cup B_{x_m}$ for some $m \in \N$. In view of \cite[Proposition~2.1.2]{clarke1983optimization}, we have
\[
\gph \partial\varphi_\xi\cap (V\times\R^d)\subseteq V\times K_\xi\B,
\qquad K_\xi:=\max_{i=1,\dots,m}K_{x_i}<+\infty ,
\] which proves the claim. Furthermore,   
 $C$ is a measurable set-valued mapping by standard measurability properties (see, e.g., \cite[Theorem 14.56]{rockafellarwets1998}) with nonempty compact values. For $t=(z,v)\in X$, define
	\begin{equation}\label{eq:a_b_from_subgrad_lower}
		a(\xi,t):=v+\varrho(\xi)z,
		\qquad
		b(\xi,t):=\varphi_\xi(z)-\frac{\varrho(\xi)}{2}\|z\|^2-\langle v,z\rangle.
	\end{equation}
	The mappings $a$ and $b$ are Carath\'eodory: $a$ is affine in $t$, and since $\varphi_\xi$ is continuous on $V$ and $\varphi$ is a normal integrand, $b$ is continuous in $t=(z,v)$ for each $\xi$ and measurable in $\xi$ for each fixed $t$.

	Fix $\xi\in\Xi$ and $y\in V$. From \eqref{eq:lowerC2_ascent}, for every $t=(z,v)\in C(\xi)$,
	\begin{align*}
		\varphi_\xi(y)
		&\geq
		\varphi_\xi(z)+\langle v,y-z\rangle-
		\frac{\varrho(\xi)}{2}\|y-z\|^2 \\
		&=
		-\frac{\varrho(\xi)}{2}\|y\|^2
		+\langle v+\varrho(\xi)z,y\rangle
		+\varphi_\xi(z)-\frac{\varrho(\xi)}{2}\|z\|^2-
		\langle v,z\rangle \\
		&=
		-\frac{\varrho(\xi)}{2}\|y\|^2+\langle a(\xi,t),y\rangle+b(\xi,t).
	\end{align*}
	Taking the maximum over $t\in C(\xi)$ gives one inequality in \eqref{eq:quadratic_affine_representation_lower}. Conversely, since $\partial\varphi_\xi(y)\neq\emptyset$, pick $v_y\in\partial\varphi_\xi(y)$. Then $t=(y,v_y)\in C(\xi)$ and the corresponding quadratic-affine function equals $\varphi_\xi(y)$ at $y$. Hence \eqref{eq:quadratic_affine_representation_lower} holds.
	
	Now we show that \ref{it:eqC2l-c} implies \ref{it:eqC2l-a}.
	Assume that \eqref{eq:quadratic_affine_representation_lower} holds and define
	\[
	g(\xi,t,x):=-\frac{\varrho(\xi)}{2}\|x\|^2+
	\langle a(\xi,t),x\rangle+b(\xi,t).
	\]
	Then $g$ is Carath\'eodory, $x\mapsto g(\xi,t,x)$ is twice-continuously differentiable, the mappings $g$, $\nabla_x g=-\varrho(\xi)x+a(\xi,t)$ and $\nabla_x^2 g=-\varrho(\xi)I$ are continuous in $(t,x)$, and $\|\nabla^2_x g(\xi,t,x)\|=\varrho(\xi)$. Hence \eqref{eq:smooth_value_representation_lower}--\eqref{eq:hessian_bound_g_lower} hold with the same $V$, $\varrho$, $X$ and $C$, i.e., $\varphi$ is a lower-$C^2$ normal integrand around $\bar x$ with $L^p_\Pp$ curvature modulus.
	
	To show that \ref{it:eqC2l-c} implies \ref{it:eqC2l-d}, observe that
	due to \eqref{eq:quadratic_affine_representation_lower}, it is enough to consider $f(\xi,x):= \max_{t\in C(\xi)}
	\left\{ \langle a(\xi,t),x\rangle
	+b(\xi,t)
	\right\}$, which is clearly a finite convex normal integrand.  
	
	To conclude the proof, we show \ref{it:eqC2l-d} implies \ref{it:eqC2l-b}. If \eqref{eq:convex_minus_quadratic} holds, then $\varphi_\xi+\frac{\varrho(\xi)}{2}\|\cdot\|^2=f_\xi$ is convex on $V$. Shrinking $V$ if necessary, $\varphi_\xi$ is locally Lipschitz on a neighborhood of $V$, since $f_\xi$ is convex and finite on a neighborhood of $V$. For $x,y\in V$ and $v\in\partial\varphi_\xi(x)$ we have $v+\varrho(\xi)x\in\partial f_\xi(x)$ by the smooth sum rule for the Clarke subdifferential, and the convex subgradient inequality gives
	\[
	\varphi_\xi(y)+\frac{\varrho(\xi)}{2}\|y\|^2
	\geq
	\varphi_\xi(x)+\frac{\varrho(\xi)}{2}\|x\|^2
	+\big\langle v+\varrho(\xi)x,\,y-x\big\rangle,
	\]
	which upon rearranging is exactly \eqref{eq:lowerC2_ascent}.
	
\end{proof}

We next record the corresponding characterization of upper-$C^2$ normal integrands, which follows by applying Theorem~\ref{thm:eqC2lower} to $-\varphi$. Observe that the characterization states that upper-$C^2$ normal integrands satisfy a generalization of the so-called \emph{descent lemma} (cf. Lemma~\ref{lem:smooth_descent_estimate}), pivotal in the convergence analysis of optimization algorithms.

\begin{corollary}[Equivalent characterizations of upper-$C^2$ normal integrands]\label{cor:eqC2upper}
	Let $\varphi:\Xi\times\R^d\to\R$ be a finite-valued normal integrand, let $\bar x\in\R^d$, and let $p\in[1,\infty]$. Then the following assertions are equivalent:
	\begin{enumerate}[label=(\alph*)]
		\item\label{it:eqC2u-a} $\varphi$ is an upper-$C^2$ normal integrand around $\bar x$ with $L^p_\Pp$ curvature modulus, i.e., there exist a compact convex neighborhood $V$ of $\bar x$, a nonnegative $\varrho\in L^p_\Pp$, a Polish space $X$, a measurable set-valued mapping $C:\Xi\setto X$ with nonempty compact values, and a Carath\'eodory mapping $g:\Xi\times X\times\R^d\to\R$ satisfying \eqref{eq:hessian_bound_g_lower} and the smoothness requirements of Definition~\ref{def:upperC2normal}, such that
		\begin{equation}\label{eq:smooth_value_representation_upper}
			\varphi_\xi(x)=\min_{t\in C(\xi)} g(\xi,t,x)
			\qquad\text{for all }x\in V.
		\end{equation}
		
		\item\label{it:eqC2u-b} There exist a compact convex neighborhood $V$ of $\bar x$ and a nonnegative function $\varrho\in L^p_\Pp$ such that, for $\Pp$-almost every $\xi\in\Xi$, the function $\varphi_\xi$ is locally Lipschitz on a neighborhood of $V$ and
		\begin{equation}\label{eq:upperC2_descent}
			\varphi_\xi(y)
			\leq
			\varphi_\xi(x)+\langle v,y-x\rangle
			+\frac{\varrho(\xi)}{2}\|y-x\|^2
			\quad
			\text{for all } x,y\in V \text{ and all } v\in\partial\varphi_\xi(x).
		\end{equation}

		\item\label{it:eqC2u-c} There exist a compact convex neighborhood $V$ of $\bar x$, a nonnegative function $\varrho\in L^p_\Pp$, a Polish space $X$, a measurable set-valued mapping $C:\Xi\setto X$ with nonempty compact values, and Carath\'eodory mappings $a:\Xi\times X\to\R^d$ and $b:\Xi\times X\to\R$ such that, for $\Pp$-almost every $\xi\in\Xi$ and all $x\in V$,
		\begin{equation}\label{eq:quadratic_affine_representation_upper}
			\varphi_\xi(x)
			=
			\min_{t\in C(\xi)}
			\left\{
			\frac{\varrho(\xi)}{2}\|x\|^2
			+\langle a(\xi,t),x\rangle
			+b(\xi,t)
			\right\}.
		\end{equation}
		\item\label{it:eqC2u-d} There exist a compact convex neighborhood $V$ of $\bar x$, a nonnegative function $\varrho\in L^p_\Pp$, and a concave normal integrand $f:\Xi\times\R^d\to\R$ such that, for $\Pp$-almost every $\xi\in\Xi$,
		\begin{equation}\label{eq:concave_plus_quadratic}
			\varphi_\xi(x)=f_\xi(x)+\frac{\varrho(\xi)}{2}\|x\|^2
			\qquad\text{for all } x\in V.
		\end{equation}
		
	\end{enumerate}
	Moreover, the modulus $\varrho$ can be taken to be the same in all items \ref{it:eqC2u-a}--\ref{it:eqC2u-d}.
\end{corollary}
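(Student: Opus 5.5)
The plan is to apply Theorem~\ref{thm:eqC2lower} to the finite-valued integrand $\psi:=-\varphi$ and translate each of the four items back through the sign change. First, $\psi$ is again a finite-valued normal integrand. Since $\varphi$ is finite, $\epi\psi_\xi$ is closed exactly when $\psi_\xi$ is lsc, that is, when $\varphi_\xi$ is usc. In every item, $\varphi_\xi$ is in fact continuous near $\bar x$, and measurability of $\xi\mapsto\epi(-\varphi_\xi)$ follows from measurability of $\xi\mapsto\varphi(\xi,x)$ at each $x$ by \cite[Chapter~14]{rockafellarwets1998}. If needed, I will simply note, as the paper does, that the corollary is a formal restatement of the theorem for $-\varphi$, so that ``normal integrand'' is understood for both signs.

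Item \ref{it:eqC2u-a} is, by the definition of upper-$C^2$, item \ref{it:eqC2l-a} for $\psi$, with $g$ replaced by $-g$. Indeed, $\min_t g=-\max_t(-g)$, and $\|\nabla_x^2(-g)\|=\|\nabla_x^2 g\|$, so the Hessian bound \eqref{eq:hessian_bound_g_lower} and all continuity and Carath\'eodory properties are unchanged.

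Item \ref{it:eqC2u-c} is item \ref{it:eqC2l-c} for $\psi$ with $(a,b)$ replaced by $(-a,-b)$. Item \ref{it:eqC2u-d} is item \ref{it:eqC2l-d} for $\psi$ with the convex normal integrand $-f$. Both translations just multiply the displayed identities by $-1$, with the same $V$, $C$, $X$ and $\varrho$.

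The only step needing care is \ref{it:eqC2u-b}, because Clarke regularity is not symmetric. The key fact is that the Clarke subdifferential satisfies $\partial(-h)(x)=-\partial h(x)$ for locally Lipschitz $h$ \cite[Proposition~2.3.1]{clarke1983optimization}. Local Lipschitzness on a neighborhood of $V$ is clearly preserved under the sign change. Given $v\in\partial\varphi_\xi(x)$, we have $-v\in\partial\psi_\xi(x)$, and the ascent inequality \eqref{eq:lowerC2_ascent} for $\psi$ with subgradient $-v$, after multiplying by $-1$, is exactly \eqref{eq:upperC2_descent}. The converse direction is identical.

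Since every translation keeps the same $\varrho$, the final claim that the modulus may be taken equal in all items also transfers directly from the theorem. I expect no real obstacle beyond invoking this symmetry of the Clarke subdifferential correctly.
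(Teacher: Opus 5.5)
Your proposal is correct and follows the paper's own argument: the corollary is obtained by applying Theorem~\ref{thm:eqC2lower} to $-\varphi$, with $(g,a,b,f)\mapsto(-g,-a,-b,-f)$, and the symmetry $\partial(-\varphi_\xi)(x)=-\partial\varphi_\xi(x)$ of the Clarke subdifferential converts the ascent inequality into item~\ref{it:eqC2u-b}. One small caveat concerns your normality remark. Continuity of $\varphi_\xi$ near $\bar x$ does not make $-\varphi$ a normal integrand on all of $\R^d$, but this is harmless: the theorem's proof only uses the integrand near $V$, and the measurability facts it invokes carry over to $-\varphi$ through the sign change. Those facts are measurability of $\xi\mapsto\varphi(\xi,x)$ and of the Clarke-graph mapping on $V$, which for $-\varphi$ is the reflection $(z,v)\mapsto(z,-v)$ of the one for $\varphi$. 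The paper passes over this point without comment.
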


In the following, we collect several direct consequences of Corollary~\ref{cor:eqC2upper}, illustrating that the assumptions are easy to check in common stochastic programming constructions.

\begin{example}[Smooth normal integrands]\label{ex:smooth_integrands}
	Let $\varphi:\Xi\times\R^d\to\R$ be a Carath\'eodory integrand. Fix $\bar x\in\R^d$ and suppose that there exist a compact convex neighborhood $V$ of $\bar x$ and nonnegative functions $\kappa\in L^2_\Pp$ and $\varrho\in L^1_\Pp$ such that, for every $\xi\in\Xi$, the function $\varphi_\xi$ is twice-continuously differentiable on (an open set containing) $V$ and
	\[
	\|\nabla_x\varphi_\xi(y)-\nabla_x\varphi_\xi(z)\|
	\leq \varrho(\xi)\|y-z\|,
	\qquad
	|\varphi_\xi(y)-\varphi_\xi(z)|
	\leq \kappa(\xi)\|y-z\|
	\]
	for all $y,z\in V$. Then $\varphi$ is locally Lipschitz around $\bar x$ with square-integrable modulus, and both lower- and upper-$C^2$ around $\bar x$ with integrable curvature modulus $\varrho$, in view of Lemma~\ref{lem:smooth_descent_estimate} and the identity $\partial\varphi_\xi(x)=\{\nabla_x\varphi_\xi(x)\}$.
\end{example}

\begin{example}[Finite lower envelopes of smooth integrands]\label{ex:finite_minima}
	Let $m\in\N$ and let $f_i:\Xi\times\R^d\to\R$, $i=1,\ldots,m$, be Carath\'eodory integrands. Define
	\[
	\varphi_\xi(x):=\min_{1\leq i\leq m} f_i(\xi,x).
	\]
	Assume that, around a point $\bar x$, each $f_i$ satisfies the hypotheses of Example~\ref{ex:smooth_integrands} on the same compact convex neighborhood $V$, with nonnegative curvature moduli $\varrho_i\in L^1_\Pp$ and Lipschitz moduli $\kappa_i\in L^2_\Pp$. Then $\varphi$ is locally Lipschitz around $\bar x$ with square-integrable modulus $\kappa(\xi):=\max_{1\leq i\leq m}\kappa_i(\xi)$ and upper-$C^2$ around $\bar x$ with curvature modulus
	\[
	\varrho(\xi):=\max_{1\leq i\leq m}\varrho_i(\xi).
	\]
	Indeed, the conclusion follows from Corollary~\ref{cor:eqC2upper}\ref{it:eqC2u-a} by taking $X=\{1,\ldots,m\}$ and $C(\xi)=X$.    By replacing the minimum with a maximum in the definition of $\varphi_\xi$, we obtain a lower-$C^2$ normal integrand. 
\end{example}

\begin{example}[A nonsmooth DC sample function]\label{ex:dc_sample}
	Let $A:\Xi\to\R^{m\times d}$ and $c:\Xi\to\R^m$ be measurable, and define
	\[
	\varphi_\xi(x)
	:=\frac12\|A(\xi)x-c(\xi)\|^2-\lambda(\xi)\|x\|_1,
	\]
	where $\lambda:\Xi\to[0,\infty)$ is measurable. On any compact neighborhood $V$ of $\bar x$, this has the form $g_\xi-h_\xi$ with
	\[
	g_\xi(x)=\frac12\|A(\xi)x-c(\xi)\|^2,
	\quad h_\xi(x)=\lambda(\xi)\|x\|_1 \quad \mbox{ for } x \in V.
	\]
	If $\|A(\xi)\|^2\in L^1_\Pp$ and the corresponding local Lipschitz moduli of $g$ and $h$ on $V$ belong to $L^2_\Pp$, then Corollary~\ref{cor:dc_integrands} below shows that $\varphi$ is a locally Lipschitz upper-$C^2$ normal integrand. 
\end{example}

\begin{example}[Two-stage stochastic programs with compact recourse]\label{ex:two_stage}
	Consider a two-stage stochastic program of the form
	\[
	\min_{x\in\R^d}\; \psi(x)+\int_\Xi \varphi_\xi(x)\,\Pp(d\xi),
	\]
	with second-stage value function
	\[
	\varphi_\xi(x):=\min_{y\in C(\xi)} G(\xi,y,x).
	\]
	Assume that $C:\Xi\setto\R^m$ is measurable with nonempty compact values and that $G:\Xi\times\R^m\times\R^d\to\R$ is a Carath\'eodory integrand. Fix $\bar x\in\R^d$ and suppose that there exist a compact convex neighborhood $V$ of $\bar x$ and a nonnegative $\varrho\in L^1_\Pp$ such that, for every $\xi\in\Xi$ and every $y\in C(\xi)$, the mapping $x\mapsto G(\xi,y,x)$ is twice-continuously differentiable on (an open set containing) $V$, the mappings $\nabla_x G(\xi,\cdot,\cdot)$ and $\nabla_x^2 G(\xi,\cdot,\cdot)$ are continuous in $(y,x)$, and
	\[
	\|\nabla_x^2 G(\xi,y,x)\|\leq \varrho(\xi)
	\quad
	\text{for all } x\in V.
	\]
	Then Corollary~\ref{cor:eqC2upper}\ref{it:eqC2u-a} implies that $\varphi$ is an upper-$C^2$ normal integrand around $\bar x$ with integrable curvature modulus. If, in addition, $\varphi$ is locally Lipschitz around $\bar x$ with a square-integrable modulus, the reduced expected-value objective
	\[
	x\mapsto \psi(x)+\Intf{\varphi}(x)
	\]
	falls within the upper-$C^2$ normal-integrand framework developed above, once the deterministic term $\psi$ is handled separately.
\end{example}

\section{Archetypal classes of lower- and upper-$C^2$ normal integrands}\label{sec:examples_classes}

In this section we examine how some operations that are standard in optimization, including difference-of-convex decompositions, Moreau envelopes, and compositions, produce normal integrands that are lower- or upper-$C^2$ in the sense of Definition~\ref{def:upperC2normal}. At the level of a single sample function, each of these facts is classical, and the arguments below follow the deterministic ones closely. What is specific to the present setting is a quantitative   treatment of the random curvature and Lipschitz moduli that these constructions generate. 

In most statements we exhibit the moduli explicitly in terms of the data, and we track how their integrability exponents are produced from those of the data, typically through H\"older's inequality. 
This allows the samplewise estimates to be transferred to the expected functional $\Intf{\varphi}$,  which will be useful in Section~\ref{sec:lipschitz_moduli}. It is also where new hypotheses appear, as a construction may be harmless samplewise, but fail to produce an integrable modulus.

We start by showing that the difference of a Lipschitz smooth function and a convex function is an upper-$C^2$ normal integrand.

\begin{corollary} \label{cor:dc_integrands}
	Let $p,r\in[1,\infty]$ and let $\varphi:\Xi\times\R^d\to\R$ be a normal integrand. Fix $\bar x\in\R^d$ and suppose that there are normal integrands $g:\Xi\times\R^d\to\R$ and $h:\Xi\times\R^d\to\R$, a compact convex neighborhood $V$ of $\bar x$, and nonnegative functions $\varrho\in L^p_\Pp$, $\kappa\in L^r_\Pp$ and $\ell\in L^r_\Pp$ such that:
	
	\[
	\varphi(\xi,x)=g(\xi,x)-h(\xi,x)\quad\text{for all } x\in V\text{ and all } \xi\in\Xi,
	\] and

	\begin{enumerate}[label=(\roman*)]
		\item for every $\xi\in\Xi$, the function $g_\xi:=g(\xi,\cdot)$ is continuously differentiable on $V$ and
		\[
		\|\nabla g_\xi(y)-\nabla g_\xi(z)\|
		\leq \varrho(\xi)\|y-z\|
		\quad\text{for all } y,z\in V;
		\]
		\item for every $\xi\in\Xi$, the function $h_\xi:=h(\xi,\cdot)$ is finite and convex on $V$;
		\item for all $y,z\in V$ and all $\xi\in\Xi$,
		\[
		|g_\xi(y)-g_\xi(z)|\leq \kappa(\xi)\|y-z\|,
		\qquad
		|h_\xi(y)-h_\xi(z)|\leq \ell(\xi)\|y-z\|.
		\]
	\end{enumerate}
	Then $\varphi$ is locally Lipschitz around $\bar x$ with $r$-integrable modulus $\kappa+\ell$ and upper-$C^2$ around $\bar x$ with $p$-integrable curvature modulus $\varrho$.
\end{corollary}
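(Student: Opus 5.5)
The Lipschitz claim is immediate. For $y,z\in V$ the triangle inequality and (iii) give $|\varphi_\xi(y)-\varphi_\xi(z)|\le(\kappa(\xi)+\ell(\xi))\|y-z\|$. Moreover $\kappa+\ell\in L^r_\Pp$ by Minkowski. Since the statement only controls things on $V$, I will work with a slightly smaller compact convex neighborhood $V'\subseteq\inter V$ of $\bar x$ wherever an open neighborhood is needed. This is permitted, because Corollary~\ref{cor:eqC2upper} only requires \emph{some} compact convex neighborhood.

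For the upper-$C^2$ property I would verify item \ref{it:eqC2u-c} of Corollary~\ref{cor:eqC2upper} directly, by a quadratic-affine lower envelope representation. The descent lemma, Lemma~\ref{lem:smooth_descent_estimate}, applied to $g_\xi$ on the convex set $V$ gives
\[
g_\xi(x)\le g_\xi(z)+\langle\nabla g_\xi(z),x-z\rangle+\tfrac{\varrho(\xi)}{2}\|x-z\|^2 .
\]
Convexity of $h_\xi$ on $V$ gives $h_\xi(x)\ge h_\xi(z)+\langle w,x-z\rangle$ for $z\in V'$ and $w\in\partial h_\xi(z)$. Subtracting, for all $x\in V'$ and $t=(z,w)\in C(\xi):=\gph\partial h_\xi\cap(V'\times\R^d)$ we get
\[
\varphi_\xi(x)\le\varphi_\xi(z)+\langle\nabla g_\xi(z)-w,x-z\rangle+\tfrac{\varrho(\xi)}{2}\|x-z\|^2 .
\]
Equality holds at $z=x$. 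Expanding the right-hand side gives the form $\tfrac{\varrho(\xi)}{2}\|x\|^2+\langle a(\xi,t),x\rangle+b(\xi,t)$, with
\[
a(\xi,t)=\nabla g_\xi(z)-w-\varrho(\xi)z,\qquad b(\xi,t)=\varphi_\xi(z)-\langle\nabla g_\xi(z)-w,z\rangle+\tfrac{\varrho(\xi)}{2}\|z\|^2 .
\]
Hence $\varphi_\xi(x)=\min_{t\in C(\xi)}\{\cdots\}$ on $V'$, which is \eqref{eq:quadratic_affine_representation_upper}. Corollary~\ref{cor:eqC2upper} then gives the upper-$C^2$ property with the same modulus $\varrho\in L^p_\Pp$.

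The work lies in the measurability and compactness bookkeeping, exactly as in the proof of (b)$\Rightarrow$(c) in Theorem~\ref{thm:eqC2lower}. First, $C(\xi)$ is nonempty and closed, since the convex subdifferential has closed graph on the interior of the domain. Second, $C(\xi)$ is bounded, because (iii) gives $\partial h_\xi(z)\subseteq\ell(\xi)\B$ for $z\in V'\subseteq\inter V$. Third, $C$ is measurable by \cite[Theorem 14.56]{rockafellarwets1998} applied to the normal integrand $h$. Fourth, $a$ and $b$ are Carath\'eodory. Continuity in $t$ follows from continuity of $\nabla g_\xi$ and $\varphi_\xi$. Measurability of $\xi\mapsto\nabla g_\xi(z)$ follows because it is a pointwise limit of difference quotients of the measurable functions $\xi\mapsto g(\xi,\cdot)$.

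The main obstacle I expect is technical rather than conceptual. The subdifferential of $h_\xi$ is only meaningful on the interior of $V$, where $h_\xi$ is finite and convex, which is why I shrink to $V'$. The measurability of $C$ also requires $h$ to be a normal integrand that is convex near $V'$; if needed, I would replace $h_\xi$ by its convex extension $h_\xi+\iota_V$ so that \cite[Theorem 14.56]{rockafellarwets1998} applies verbatim.
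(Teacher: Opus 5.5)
Your proof is correct, but it takes a different route from the paper's. The paper verifies the descent inequality, item~(b) of Corollary~\ref{cor:eqC2upper}. It invokes the Clarke sum rule $\partial\varphi_\xi(x)=\nabla g_\xi(x)-\partial h_\xi(x)$ to write every $v\in\partial\varphi_\xi(x)$ as $\nabla g_\xi(x)-\eta$ with $\eta\in\partial h_\xi(x)$, then subtracts the convex subgradient inequality from the descent lemma. The implication (b)$\Rightarrow$(a) does the rest, so the quadratic-affine envelope is built only implicitly, in the proof of Theorem~\ref{thm:eqC2lower}, over $\gph\partial\varphi_\xi$.

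You instead go directly to item~(c), indexing the envelope by $\gph\partial h_\xi\cap(V'\times\R^d)$, that is, by subgradients of the convex part only, after which (c)$\Rightarrow$(a) is immediate. The paper's route gains brevity, since all compactness and measurability issues are delegated to the already proved theorem. Your route has three advantages:
\begin{itemize}
\item It needs no subdifferential calculus for the nonconvex function $\varphi_\xi$.
\item It avoids the requirement in item~(b) that $\varphi_\xi$ be locally Lipschitz on a neighborhood of $V$. The paper passes over this point: the hypotheses control $g$ and $h$ only on $V$, so the sum rule at boundary points of $V$ needs a shrinking step, which you make explicit with $V'\subseteq\inter V$.
\item It exhibits the explicit majorants $\varphi_\xi(z)+\langle\nabla g_\xi(z)-w,\cdot-z\rangle+\frac{\varrho(\xi)}{2}\|\cdot-z\|^2$ of difference-of-convex type.
\end{itemize}
The price is that you must redo the bookkeeping yourself: closedness of $C(\xi)$, boundedness via $\partial h_\xi(z)\subseteq\ell(\xi)\B$, measurability of $C$, and measurability of $\xi\mapsto\nabla g_\xi(z)$ through difference quotients at interior points. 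You handle all of these correctly. Moreover, since $h_\xi$ is convex near $V'$, its limiting and Clarke subdifferentials coincide there, so your appeal to \cite[Theorem~14.56]{rockafellarwets1998} applied to $h$ is on at least as firm ground as the paper's use of it for $\gph\partial\varphi_\xi$.
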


\begin{proof}
	The local Lipschitz estimate follows immediately from~(iii). Let $x,y\in V$ and $v\in\partial\varphi_\xi(x)$. Since $g_\xi$ is continuously differentiable and $h_\xi$ is finite convex, the Clarke calculus rules give
	\[
	\partial\varphi_\xi(x)
	=\nabla g_\xi(x)-\partial h_\xi(x),
	\]
	where $\partial h_\xi(x)$ is the convex subdifferential. Hence there exists $\eta\in\partial h_\xi(x)$ such that $v=\nabla g_\xi(x)-\eta$. By Lemma~\ref{lem:smooth_descent_estimate},
	\[
	g_\xi(y)\leq g_\xi(x)+\langle\nabla g_\xi(x),y-x\rangle
	+\frac{\varrho(\xi)}{2}\|y-x\|^2,
	\]
	while convexity of $h_\xi$ gives
	\[
	h_\xi(y)\geq h_\xi(x)+\langle\eta,y-x\rangle.
	\]
	Subtracting the second inequality from the first yields
	\[
	\varphi_\xi(y)\leq \varphi_\xi(x)+\langle v,y-x\rangle
	+\frac{\varrho(\xi)}{2}\|y-x\|^2,
	\]
	which is \eqref{eq:upperC2_descent}. Therefore, by Corollary~\ref{cor:eqC2upper}, we conclude the result. 
\end{proof}

Next, we show that under appropriate integrability assumptions, the Moreau envelope of a normal integrand is an upper-$C^2$ normal integrand.

\begin{corollary}[Moreau envelope]\label{cor:moreau_upperC2}
	Let $\varphi:\Xi\times\R^d\to\Rex$ be a  normal integrand and let $\bar x\in\R^d$. Let $p\in[2,\infty]$ and $r,s\in[1,\infty]$, and suppose that there exist $\gamma^\varphi>0$, $\alpha:\Xi\to\R^d$ $p$-integrable and   $\beta :\Xi \to \R$  $r$-integrable such that
	\begin{equation}\label{eq:linear-minorant}
		\varphi_\xi(x) \geq -\frac{1}{2\gamma^\varphi}\|x\|^2 + \langle \alpha_\xi, x\rangle + \beta_\xi
		\qquad\text{for all } x\in\R^d,\ \xi\in\Xi,
	\end{equation}
	and that $\varphi(\cdot,\bar x)\in L^s_\Pp$. Then, for every $\gamma\in(0,\gamma^\varphi)$, the function $e_\gamma\varphi$ is a finite-valued upper-$C^2$ normal integrand around $\bar x$ with $p'$-integrable Lipschitz modulus and $q'$-integrable curvature modulus, where
	\[
	p'=\min\Big\{\tfrac p2,\,r,\,s\Big\}
	\qquad\text{and}\qquad
	q'=\infty .
	\]
\end{corollary}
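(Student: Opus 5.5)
The plan is to write the Moreau envelope as a lower envelope of quadratic-affine functions and then apply Corollary~\ref{cor:eqC2upper}\ref{it:eqC2u-c}. Expanding the square gives, for every $x$,
\[
e_\gamma\varphi_\xi(x)=\frac{1}{2\gamma}\|x\|^2+\inf_{z\in\R^d}\Big\{\varphi_\xi(z)+\frac{1}{2\gamma}\|z\|^2-\frac1\gamma\langle z,x\rangle\Big\}.
\]
The infimum is concave in $x$, so the curvature modulus is the deterministic constant $\varrho\equiv 1/\gamma$. This constant lies in $L^\infty_\Pp$, which explains $q'=\infty$. What remains is to make the representation an attained minimum over a measurable compact random index set $C(\xi)$, to check finiteness and normality, and to estimate the Lipschitz modulus.

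\textbf{Step 1 (finiteness and localization of minimizers).} Fix $\gamma\in(0,\gamma^\varphi)$ and let $\mu:=\frac1{2\gamma}-\frac1{2\gamma^\varphi}>0$. By \eqref{eq:linear-minorant}, the objective $z\mapsto\varphi_\xi(z)+\frac1{2\gamma}\|z-x\|^2$ is bounded below by $\mu\|z\|^2-\|z\|(\|\alpha_\xi\|+\|x\|/\gamma)+\beta_\xi+\frac{1}{2\gamma}\|x\|^2$. Hence it is level-bounded, and lower semicontinuity gives attainment. Comparing with the value at $z=\bar x$ gives $e_\gamma\varphi_\xi(x)\le\varphi_\xi(\bar x)+\frac1{2\gamma}\|\bar x-x\|^2<\infty$ almost surely. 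Any minimizer $z$ therefore satisfies a quadratic inequality, which yields a bound of the form
\[
\|z\|\le R(\xi):=c\big(\|\alpha_\xi\|+|\beta_\xi|^{1/2}+|\varphi_\xi(\bar x)|^{1/2}+1\big)
\]
uniformly for $x$ in a compact convex neighborhood $V$ of $\bar x$. Here $c$ depends only on $\gamma,\gamma^\varphi,V$.

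\textbf{Step 2 (the representation).} I take $X=\R^d$ and $C(\xi):=\{z\in \R^d:\|z\|\le R(\xi),\ \varphi_\xi(z)\le M(\xi)\}$, where $M(\xi)$ is a measurable upper bound for the objective values on $V$. This set is compact and nonempty, and it is measurable by \cite[Chapter 14]{rockafellarwets1998}, as a level set of a normal integrand intersected with a measurable ball. On it I put $a(\xi,z)=-z/\gamma$ and $b(\xi,z)=\varphi_\xi(z)+\frac1{2\gamma}\|z\|^2$.

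\textbf{Main obstacle.} The difficulty is that $b$ is only lower semicontinuous in $z$, not Carath\'eodory as item \ref{it:eqC2u-c} demands. I would therefore bypass \ref{it:eqC2u-c} and verify item \ref{it:eqC2u-b} directly. Item \ref{it:eqC2u-d} is also available, with $f_\xi=e_\gamma\varphi_\xi-\frac1{2\gamma}\|\cdot\|^2$ concave as an infimum of affine functions. Its integrand $f$ is finite and is normal because $e_\gamma\varphi$ is normal (\cite[Chapter 14]{rockafellarwets1998}). The proof of (d)$\Rightarrow$(b) in Theorem~\ref{thm:eqC2lower} uses no measurable structure, and the implication (b)$\Rightarrow$(a) already constructs a Carath\'eodory representation through Clarke subgradient graphs. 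So I would invoke Corollary~\ref{cor:eqC2upper} via \ref{it:eqC2u-d}, with $\varrho=1/\gamma$.

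\textbf{Step 3 (Lipschitz modulus).} At points of differentiability, any minimizer $z_x$ gives the subgradient bound $\|\partial e_\gamma\varphi_\xi(x)\|\le \|x-z_x\|/\gamma\le(\|x\|+R(\xi))/\gamma$. Hence on $V$ the Lipschitz modulus is at most $\kappa(\xi)=c'(1+R(\xi))$. Now $\|\alpha\|\in L^p_\Pp$, $|\beta|^{1/2}\in L^{2r}_\Pp$ and $|\varphi(\cdot,\bar x)|^{1/2}\in L^{2s}_\Pp$. This gives integrability of order $\min\{p,2r,2s\}\ge\min\{p/2,r,s\}=p'$, since a probability space has nested $L^q$ spaces. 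The stated exponent is therefore obtained, even with room to spare. If a cruder bound is wanted, one can instead bound the values $|e_\gamma\varphi_\xi|$ on $V$ by $|\varphi_\xi(\bar x)|+|\beta_\xi|+C\|\alpha_\xi\|^2+C$, which lies in $L^{p'}_\Pp$. Combined with the concave-plus-quadratic structure, Proposition~\ref{prop:convex_integrable_values} applied to $-f$ then also yields a $p'$-integrable modulus.
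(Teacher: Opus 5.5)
Your proposal is correct. For the Lipschitz modulus it takes a genuinely different route from the paper, and it gives a sharper exponent.

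\textbf{The paper's route.} The paper never looks at proximal points. It bounds the Asplund function $A_\gamma\varphi_\xi=\frac1{2\gamma}\|\cdot\|^2-e_\gamma\varphi_\xi$ from above by the conjugate of the quadratic minorant and from below by testing $z=\bar x$, which gives $A_\gamma\varphi(\cdot,x)\in L^{p'}_\Pp$ for every $x$. It then obtains a $p'$-integrable modulus for this convex part from Proposition~\ref{prop:convex_integrable_values}. It concludes with Corollary~\ref{cor:dc_integrands}, applied to $g_\xi=\frac1{2\gamma}\|\cdot\|^2$ and $h_\xi=A_\gamma\varphi_\xi$. This is exactly the ``cruder'' alternative you sketch at the end of Step~3.

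\textbf{Curvature.} Your use of item~\ref{it:eqC2u-d} of Corollary~\ref{cor:eqC2upper} is equivalent to the paper's, since Corollary~\ref{cor:dc_integrands} itself passes through item~\ref{it:eqC2u-b}. The abandoned compact-index construction of Step~2 can simply be deleted.

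\textbf{What each route buys.} Every proximal point $z_x$ with $x\in V$ satisfies $\|z_x\|\le R(\xi)$, where $R(\xi)\le c\big(1+\|\alpha_\xi\|+|\beta_\xi|^{1/2}+|\varphi_\xi(\bar x)|^{1/2}\big)$. Hence your modulus lies in $L^{\min\{p,2r,2s\}}_\Pp=L^{2p'}_\Pp$, not merely $L^{p'}_\Pp$. For $\varphi_\xi=\langle\alpha_\xi,\cdot\rangle$ the true modulus is $\|\alpha_\xi\|\in L^p_\Pp$, which your exponent captures exactly, whereas the stated result only gives $L^{p/2}_\Pp$. In exchange, the paper's route works with function values alone and reuses two general tools.

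\textbf{A step to tighten.} Your sentence about subgradients at points of differentiability needs a justification of the gradient formula, and then Rademacher or Lebourg to turn it into a Lipschitz bound. It is cleaner to compare directly. For $x,x'\in V$ and $z_x\in\argmin_z\{\varphi_\xi(z)+\frac1{2\gamma}\|z-x\|^2\}$,
\[
e_\gamma\varphi_\xi(x')-e_\gamma\varphi_\xi(x)\le\frac1{2\gamma}\big(\|z_x-x'\|^2-\|z_x-x\|^2\big)\le\frac1\gamma\Big(R(\xi)+\sup_{u\in V}\|u\|\Big)\|x'-x\| .
\]
Exchanging the roles of $x$ and $x'$ then gives the Lipschitz estimate on $V$ directly.
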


\begin{proof}
	Fix $\gamma\in(0,\gamma^\varphi)$ and set $c_\gamma:=\dfrac1\gamma-\dfrac{1}{\gamma^\varphi}>0$. 
	By \eqref{eq:linear-minorant}, $\varphi_\xi$ is prox-bounded with threshold at least $\gamma^\varphi$, so $e_\gamma\varphi_\xi$ is finite-valued and, by \cite[Proposition 2.7]{aragon2025coderivative}, for all $x\in\R^d$,
	\begin{equation}\label{eq:asplund_decomposition}
		e_\gamma\varphi_\xi(x)=\frac{1}{2\gamma}\|x\|^2-A_\gamma\varphi_\xi(x),
		\qquad
		A_\gamma\varphi_\xi(x):=\sup_{z\in\R^d}\Big\{\frac1\gamma\langle z,x\rangle-\varphi_\xi(z)-\frac{1}{2\gamma}\|z\|^2\Big\}.
	\end{equation}
	Being a pointwise supremum of affine functions of $x$, the function $A_\gamma\varphi_\xi$ is convex and lower semicontinuous. Next, using   \eqref{eq:linear-minorant}, we have that
	\begin{equation}\label{eq:aux-1}
	A_\gamma\varphi_\xi(x)
	\leq
	\sup_{z\in\R^d}\Big\{\Big\langle \tfrac{x}{\gamma}-\alpha_\xi,\,z\Big\rangle-\frac{c_\gamma}{2}\|z\|^2\Big\}-\beta_\xi
	=\frac{1}{2c_\gamma}\Big\|\tfrac{x}{\gamma}-\alpha_\xi\Big\|^2-\beta_\xi .\end{equation} In particular, $A_\gamma\varphi_\xi$ is finite, hence continuous.  For fixed $x$, $(\xi,z)\mapsto\varphi_\xi(z)+\frac{1}{2\gamma}\|z-x\|^2$ is a normal integrand \cite[Proposition~14.44(d)]{rockafellarwets1998}, so $\xi\mapsto e_\gamma\varphi_\xi(x)$ is measurable \cite[Theorem~14.37]{rockafellarwets1998}, and so is $\xi\mapsto A_\gamma\varphi_\xi(x)$ by \eqref{eq:asplund_decomposition}. Hence, $A_\gamma\varphi$ is a Carath\'eodory integrand, and thus a normal integrand \cite[Example~14.29]{rockafellarwets1998}.
    Moreover, evaluating the supremum in \eqref{eq:asplund_decomposition} at $z=\bar x$, \begin{equation}\label{eq:aux-2}
	    A_\gamma\varphi_\xi(x)\geq \frac1\gamma\langle \bar x,x\rangle-\varphi_\xi(\bar x)-\frac{1}{2\gamma}\|\bar x\|^2 .
	\end{equation}
	Combining \eqref{eq:aux-1} and \eqref{eq:aux-2}, $A_\gamma\varphi_\xi$ is finite on $\R^d$ and $A_\gamma\varphi(\cdot,x)\in L^{p'}_\Pp$ for every $x\in\R^d$. Hence, by Proposition~\ref{prop:convex_integrable_values}, $A_\gamma\varphi$ is locally Lipschitz around any point with $p'$-integrable modulus. Finally, Corollary~\ref{cor:dc_integrands} gives the conclusion.
\end{proof}

We next extend the composition rule in \cite[Proposition 11 (ii)]{daniilidis2005filling} to the case of normal integrands. 
We include it for completeness and to account for the integrability of the moduli.

\begin{proposition}[Composition rule]\label{prop:composite}
	Let $r\in[1,\infty]$ and let $p,q\in[r,\infty]$ satisfy $\frac1p+\frac1q=\frac1r$. Let $G:\Xi\times\R^d\to\R^m$ be a Carath\'eodory mapping and let $f:\Xi\times\R^m\to\Rex$ be a convex normal integrand.  Let $\bar x\in\R^d$ and let $V$ be a neighborhood of $\bar x$, and  assume that there exist $M\in L^p_\Pp$ and $L\in L^q_\Pp$ such that, for $\Pp$-almost every $\xi\in\Xi$,
	\begin{enumerate}[label=(\roman*)]
		\item $G_\xi$ is differentiable on an open convex set containing $V$, and $\nabla G_\xi$ is Lipschitz continuous on $V$ with modulus $L_\xi\geq0$. Moreover, $\xi\mapsto\|\nabla G_\xi(\bar x)\|$ belongs to $L^q_\Pp$.
		\item $f_\xi$ is finite and Lipschitz continuous with modulus $M_\xi\geq0$ on an open convex set containing $G_\xi(V)$.
	\end{enumerate}
	Then $\varphi:=f\circ G$, that is $\varphi_\xi:=f_\xi\circ G_\xi$, is a lower-$C^2$ normal integrand around $\bar x$ with $r$-integrable curvature modulus 
	and it is locally Lipschitz around $\bar x$ with $r$-integrable modulus. 
	
\end{proposition}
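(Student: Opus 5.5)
We plan to verify item \ref{it:eqC2l-b} of Theorem~\ref{thm:eqC2lower}, the ascent inequality, with an explicit modulus. First we shrink $V$ to a compact convex neighborhood (a closed ball $\B_\delta[\bar x]$) contained in the open convex set of (i). Measurability is handled as follows. For fixed $x$, the map $\xi\mapsto f_\xi(G_\xi(x))$ is measurable, since $(\xi,y)\mapsto f_\xi(y)$ is a normal integrand and $\xi\mapsto G_\xi(x)$ is measurable (composition of a normal integrand with a Carath\'eodory map, \cite[Chapter 14]{rockafellarwets1998}). Moreover, $\varphi_\xi$ is continuous on $V$. Hence $\varphi$ is Carath\'eodory near $V$, and therefore a normal integrand there.

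\textbf{Lipschitz modulus.} For $x\in V$ we have $\|\nabla G_\xi(x)\|\le \|\nabla G_\xi(\bar x)\|+L_\xi\delta=:K_\xi$, which lies in $L^q_\Pp$. By the mean value inequality on the convex set $V$, $G_\xi$ is $K_\xi$-Lipschitz on $V$. Since $G_\xi(V)$ lies in the set where $f_\xi$ is $M_\xi$-Lipschitz, we get
\[
|\varphi_\xi(y)-\varphi_\xi(z)|\le M_\xi K_\xi\|y-z\|
\qquad\text{for all } y,z\in V.
\]
By H\"older's inequality with $\frac1p+\frac1q=\frac1r$, the product $M K$ belongs to $L^r_\Pp$. (If the Lipschitz property is needed on a neighborhood of $V$, we apply the same argument on a slightly larger ball still inside the domain of (i).)

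\textbf{Ascent inequality.} Fix $x,y\in V$ and $v\in\partial\varphi_\xi(x)$. Since $f_\xi$ is convex (hence Clarke regular) and $G_\xi$ is strictly differentiable (it is $C^1$, its gradient being Lipschitz), the Clarke chain rule gives
\[
\partial\varphi_\xi(x)=\nabla G_\xi(x)^{*}\partial f_\xi(G_\xi(x))
\]
\cite[Theorem 2.3.10]{clarke1983optimization}. So $v=\nabla G_\xi(x)^*w$ for some $w\in\partial f_\xi(G_\xi(x))$, and $\|w\|\le M_\xi$ because $f_\xi$ is $M_\xi$-Lipschitz near $G_\xi(x)$. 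The convex subgradient inequality gives
\[
\varphi_\xi(y)\ge \varphi_\xi(x)+\langle w,G_\xi(y)-G_\xi(x)\rangle .
\]
Lemma~\ref{lem:smooth_descent_estimate}, applied componentwise (or directly in vector form via the integral remainder), gives
\[
\|G_\xi(y)-G_\xi(x)-\nabla G_\xi(x)(y-x)\|\le \tfrac{L_\xi}{2}\|y-x\|^2 .
\]
Combining the two,
\[
\varphi_\xi(y)\ge\varphi_\xi(x)+\langle v,y-x\rangle-\tfrac{M_\xi L_\xi}{2}\|y-x\|^2 .
\]
Thus \eqref{eq:lowerC2_ascent} holds with $\varrho:=ML$, which lies in $L^r_\Pp$ by H\"older. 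Theorem~\ref{thm:eqC2lower} then yields the lower-$C^2$ conclusion.

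\textbf{Main obstacle.} We expect the main obstacle to be the chain rule with equality and the bound $\|w\|\le M_\xi$. Equality matters, since an inclusion in the wrong direction would not control every $v$. The chain rule does hold with equality because $f_\xi$ is regular and $G_\xi$ is strictly differentiable. If this fails in some degenerate setting, a fallback is to use the inclusion $\partial\varphi_\xi(x)\subseteq\conv\{\nabla G_\xi(x)^*\partial f_\xi(G_\xi(x))\}$; the right-hand side is already convex, so the same estimate goes through. A secondary point is a measure-theoretic one: we must ensure $f_\xi$ is Lipschitz on an open set containing $G_\xi$ of a neighborhood of $V$, which is obtained by shrinking $\delta$ samplewise. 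We would avoid a $\xi$-dependent $\delta$ by taking $V$ strictly inside the given neighborhood.
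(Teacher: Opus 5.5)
Your proposal is correct and follows the paper's proof essentially step for step: the same Carath\'eodory argument for normality, the same Lipschitz modulus $M_\xi(\|\nabla G_\xi(\bar x)\|+\delta L_\xi)$, and Clarke's chain rule \cite[Theorem 2.3.10]{clarke1983optimization} with $\|w\|\le M_\xi$. It then combines the convex subgradient inequality with the integral remainder of $G_\xi$ to get $\varrho=LM\in L^r_\Pp$ by H\"older, and concludes via Theorem~\ref{thm:eqC2lower}. One minor caveat: use the remainder estimate in vector form (the paper does this via the fundamental theorem of calculus), because applying Lemma~\ref{lem:smooth_descent_estimate} componentwise only gives the constant $\sqrt{m}\,L_\xi/2$, which is harmless for integrability but is not the bound you wrote.
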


\begin{proof}
	Shrinking $V$ if necessary, we may assume that $V=\B_\delta[\bar x]$ for some $\delta>0$. By (ii), $f$ is finite and continuous in the second variable on a neighborhood of $G_\xi(V)$, hence Carath\'eodory there. Since $G$ is Carath\'eodory, so is $\varphi$ on $V$, and therefore $\varphi$ is a finite-valued normal integrand around $\bar x$.
	
	Fix $\xi\in\Xi$. By (i), $\|\nabla G_\xi(x)\|\leq\|\nabla G_\xi(\bar x)\|+L_\xi\delta$ for all $x\in V$, so (ii) gives, for $x,y\in V$,
	\[
	|\varphi_\xi(y)-\varphi_\xi(x)|\leq M_\xi\|G_\xi(y)-G_\xi(x)\|\leq \kappa(\xi)\|y-x\| ,
	\qquad
	\kappa(\xi):=M_\xi\big(\|\nabla G_\xi(\bar x)\|+L_\xi\delta\big).
	\]
	Since $\|\nabla G(\cdot,\bar x)\|+\delta L(\cdot)\in L^q_\Pp$ and $M\in L^p_\Pp$, the generalized H\"older inequality yields $\kappa\in L^r_\Pp$ with $\|\kappa\|_r\leq\|M\|_p\big\|\,\|\nabla G(\cdot,\bar x)\|+\delta L(\cdot)\,\big\|_q$. In particular $\varphi$ is locally Lipschitz around $\bar x$ with $r$-integrable modulus.
	
	Let $x,y\in V$ and $v\in\partial\varphi_\xi(x)$. The function $f_\xi$ is convex and finite on a neighborhood of $G_\xi(x)$, hence Clarke regular there, and $G_\xi$ is continuously differentiable  at $x$ by (i). Clarke's chain rule \cite[Theorem 2.3.10]{clarke1983optimization} therefore yields
	\[
	\partial\varphi_\xi(x)=\nabla G_\xi(x)^\top\,\partial f_\xi\big(G_\xi(x)\big),
	\]
	so there exists $u\in\partial f_\xi(G_\xi(x))$ with $v=\nabla G_\xi(x)^\top u$. Moreover $\|u\|\leq M_\xi$, because $f_\xi$ is $M_\xi$-Lipschitz on a neighborhood of $G_\xi(x)$. Convexity of $f_\xi$ then gives
	\[
	\varphi_\xi(y)-\varphi_\xi(x)
	=f_\xi(G_\xi(y))-f_\xi(G_\xi(x))
	\geq\big\langle u,G_\xi(y)-G_\xi(x)\big\rangle .
	\]
	Since $V$ is convex, the fundamental theorem of calculus applied to $t\mapsto G_\xi(x+t(y-x))$ yields
	\[
	G_\xi(y)-G_\xi(x)=\int_0^1\nabla G_\xi\big(x+t(y-x)\big)(y-x)\,dt.
	\]
	Hence, using $v=\nabla G_\xi(x)^\top u$, the Cauchy--Schwarz inequality, (i) and $\|u\|\leq M_\xi$,
	\begin{align*}
		\big\langle u,G_\xi(y)-G_\xi(x)\big\rangle
		&=\langle v,y-x\rangle
		+\int_0^1\Big\langle \big(\nabla G_\xi(x+t(y-x))-\nabla G_\xi(x)\big)^\top u,\,y-x\Big\rangle\,dt\\
		&\geq \langle v,y-x\rangle-\int_0^1 L_\xi t\,\|y-x\|\,M_\xi\,\|y-x\|\,dt
		\\
		&	=\langle v,y-x\rangle-\frac{L_\xi M_\xi}{2}\|y-x\|^2 .
	\end{align*}
	Consequently, combining the last two estimates, \eqref{eq:lowerC2_ascent} holds on $V$ with curvature modulus $\varrho=LM$, which belongs to $L^r_\Pp$ by the generalized H\"older inequality, with $\|\varrho\|_r\leq\|L\|_q\|M\|_p$. Theorem~\ref{thm:eqC2lower} gives the conclusion.
\end{proof}

Given a composite objective $f+g$ with $f$ smooth and $g$ proper, lower semicontinuous and prox-bounded, the \emph{forward--backward envelope} with parameter $\gamma>0$ is the value function associated with the forward-backward minimization subproblem; see \cite{stella2017forward,themelis2018forward}. This makes it a natural merit function for proximal-gradient-type algorithms, and an extension of the Moreau envelope. In the stochastic setting the same construction is performed samplewise: for a Carath\'eodory integrand $f$ and a normal integrand $g$ we define
\begin{equation}\label{eq:fbe_def}
	\varphi^{\mathrm{fb}}_{\gamma,\xi}(x)
	:=\min_{z\in\R^d}
	\Big\{
	f_\xi(x)+\langle\nabla f_\xi(x),z-x\rangle+g_\xi(z)+\frac{1}{2\gamma}\|z-x\|^2
	\Big\}.
\end{equation} 
This function is real-valued and, for $\gamma$ small enough, shares the minimizers and the optimal value of $f+g$. 

The next result shows that, under a uniform prox-boundedness condition on $g$ and a Lipschitz Hessian for $f$, the family \eqref{eq:fbe_def} is an upper-$C^2$ normal integrand with integrable moduli. The second-order regularity of $f$ cannot be weakened to a Lipschitz gradient: for $d=1$, $g\equiv0$ and $f'(x)=1-|x|$ near the origin, \eqref{eq:fbe_def} reduces to $f-\tfrac\gamma2(f')^2$, which has a convex kink at $0$ and is therefore not upper-$C^2$ there.

\begin{corollary}[Forward--backward envelope]\label{cor:fbe_upperC2}
	Let $s\in[1,\infty]$ and let $p,q\in[s,\infty]$ satisfy $\frac1p+\frac1q=\frac1s$. Let $\bar x\in\R^d$, let $\delta>0$, set $V:=\B_\delta[\bar x]$ and $R:=\|\bar x\|+\delta$. Let $f:\Xi\times\R^d\to\R$ be a Carath\'eodory integrand and let $g:\Xi\times\R^d\to\Rex$ be a normal integrand such that:
	\begin{enumerate}[label=(\roman*)]
		\item for every $\xi\in\Xi$, $f_\xi$ is twice-continuously differentiable on an open convex set containing $V$, and
		\[
		\|\nabla^2f_\xi(x)\|\leq L_\xi,
		\qquad
		\|\nabla^2f_\xi(x)-\nabla^2f_\xi(y)\|\leq \Gamma_\xi\|x-y\|
		\qquad\text{for all }x,y\in V,
		\]
		with $L,\Gamma\in L^q_\Pp$;
		\item there exist $\gamma^g>0$ and measurable $\alpha:\Xi\to\R^d$, $\beta:\Xi\to\R$ such that
		\[
		g_\xi(z)\geq-\frac{1}{2\gamma^g}\|z\|^2+\langle\alpha_\xi,z\rangle+\beta_\xi
		\qquad\text{for all } z\in\R^d,\ \xi\in\Xi .
		\]
	\end{enumerate}
	Fix $\gamma\in(0,\gamma^g)$, set $T_\xi:=\mathrm{id}-\gamma\nabla f_\xi$, and assume that the Asplund function $A_\gamma g_\xi$ of \eqref{eq:asplund_decomposition} is Lipschitz continuous with modulus $M_\xi$ on an open convex set containing $T_\xi(V)$, with $M\in L^p_\Pp$. Then $\varphi^{\mathrm{fb}}_\gamma$ given by \eqref{eq:fbe_def} is a finite-valued normal integrand which is upper-$C^2$ around $\bar x$ with $s$-integrable curvature modulus
	\begin{equation}\label{eq:fbe_modulus}
		\varrho(\xi):=\frac1\gamma+L_\xi+R\,\Gamma_\xi+\gamma\,\Gamma_\xi M_\xi,
	\end{equation}
	and locally Lipschitz around $\bar x$ with $s$-integrable modulus
	\begin{equation}\label{eq:fbe_lip}
		\kappa(\xi):=R\Big(\frac1\gamma+L_\xi\Big)+M_\xi\big(1+\gamma L_\xi\big).
	\end{equation}
\end{corollary}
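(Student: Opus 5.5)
The plan is to rewrite the forward--backward envelope samplewise as a smooth function minus a convex function composed with a smooth map. Then I verify the descent inequality \eqref{eq:upperC2_descent} directly and conclude by Corollary~\ref{cor:eqC2upper}.

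\textbf{Step 1: the key identity.} Completing the square in \eqref{eq:fbe_def} gives, with $T_\xi(x)=x-\gamma\nabla f_\xi(x)$,
\[
\varphi^{\mathrm{fb}}_{\gamma,\xi}(x)=f_\xi(x)-\frac{\gamma}{2}\|\nabla f_\xi(x)\|^2+e_\gamma g_\xi(T_\xi(x)).
\]
By (ii), $g_\xi$ is prox-bounded with threshold at least $\gamma^g$, so the Asplund decomposition \eqref{eq:asplund_decomposition} applies, as in the proof of Corollary~\ref{cor:moreau_upperC2}. Expanding $\frac{1}{2\gamma}\|T_\xi(x)\|^2$, the terms in $\|\nabla f_\xi\|^2$ cancel and we obtain
\[
\varphi^{\mathrm{fb}}_{\gamma,\xi}(x)=h_\xi(x)-A_\gamma g_\xi(T_\xi(x)),\qquad h_\xi(x):=f_\xi(x)-\langle x,\nabla f_\xi(x)\rangle+\frac{1}{2\gamma}\|x\|^2 .
\]

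\textbf{Step 2: finiteness and measurability.} Finiteness follows from prox-boundedness. For measurability, I argue as in Corollary~\ref{cor:moreau_upperC2}:
\begin{itemize}
\item $A_\gamma g$ is a Carath\'eodory integrand;
\item $T$ and $h$ are Carath\'eodory, since $\nabla f_\xi$ is a pointwise limit of difference quotients of a Carath\'eodory $f$.
\end{itemize}
Hence $\varphi^{\mathrm{fb}}_\gamma$ is Carath\'eodory near $\bar x$, and therefore a finite normal integrand.

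\textbf{Step 3: smooth part.} We have $\nabla h_\xi(x)=x/\gamma-\nabla^2f_\xi(x)x$. For $x,y\in V$, using $\|x\|\le R$,
\[
\|\nabla h_\xi(x)-\nabla h_\xi(y)\|\le\Big(\frac1\gamma+L_\xi+R\,\Gamma_\xi\Big)\|x-y\| .
\]
Lemma~\ref{lem:smooth_descent_estimate} then gives the upper quadratic estimate for $h_\xi$ with this modulus. Also $\|\nabla h_\xi\|\le R(1/\gamma+L_\xi)$ on $V$.

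\textbf{Step 4: composite part.} Put $F_\xi:=A_\gamma g_\xi\circ T_\xi$. Here $\nabla T_\xi=I-\gamma\nabla^2 f_\xi$ is $\gamma\Gamma_\xi$-Lipschitz and bounded by $1+\gamma L_\xi$ on $V$. So $F_\xi$ is $M_\xi(1+\gamma L_\xi)$-Lipschitz. The argument of Proposition~\ref{prop:composite} (chain rule, subgradient inequality, integral form of $T_\xi(y)-T_\xi(x)$) gives, for every $w\in\partial F_\xi(x)$,
\[
F_\xi(y)\ge F_\xi(x)+\langle w,y-x\rangle-\frac{\gamma\Gamma_\xi M_\xi}{2}\|y-x\|^2 .
\]

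\textbf{Step 5: combine.} Take $v\in\partial\varphi^{\mathrm{fb}}_{\gamma,\xi}(x)$. By the smooth sum rule and $\partial(-F)=-\partial F$, we can write $v=\nabla h_\xi(x)-w$ with $w\in\partial F_\xi(x)$. Adding the estimates of Steps~3 and~4 yields \eqref{eq:upperC2_descent} with $\varrho$ as in \eqref{eq:fbe_modulus}. Summing the Lipschitz bounds from Steps~3 and~4 gives $\kappa$ as in \eqref{eq:fbe_lip}.

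\textbf{Step 6: integrability.} Since $L,\Gamma\in L^q_\Pp\subseteq L^s_\Pp$ and $M\in L^p_\Pp\subseteq L^s_\Pp$, and since $\Gamma M,\,LM\in L^s_\Pp$ by the generalized H\"older inequality, both $\varrho$ and $\kappa$ lie in $L^s_\Pp$. Corollary~\ref{cor:eqC2upper} then concludes.

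I expect the main obstacle to be Step~1 together with its measurability bookkeeping. One must make sure the Asplund identity holds for the possibly nonconvex, extended-valued $g_\xi$ at the points $T_\xi(x)$. One must also make sure that $\nabla f$ and the composed function are jointly measurable, so that the representation is a genuine normal integrand.
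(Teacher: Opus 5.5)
Your proposal is correct and follows essentially the same route as the paper. You use the same decomposition $\varphi^{\mathrm{fb}}_{\gamma,\xi}=h_\xi-A_\gamma g_\xi\circ T_\xi$, the same moduli $\frac1\gamma+L_\xi+R\Gamma_\xi$ and $\gamma\Gamma_\xi M_\xi$ for the two pieces, and the same H\"older bookkeeping.

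The only difference is cosmetic. You add the descent estimate for $h_\xi$ and the composite ascent estimate directly. The paper instead invokes Proposition~\ref{prop:composite}, shifts both parts by $\frac{\sigma(\xi)}{2}\|\cdot\|^2$ so that the subtracted term becomes convex, and then applies Corollary~\ref{cor:dc_integrands}, whose proof performs exactly that addition.
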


\begin{proof}
	Completing the square in \eqref{eq:fbe_def} gives, for all $x\in\R^d$ (see \cite{patrinos2013proximal}),
	\begin{equation}\label{eq:fbe_moreau_form}
		\varphi^{\mathrm{fb}}_{\gamma,\xi}(x)
		=f_\xi(x)-\frac{\gamma}{2}\|\nabla f_\xi(x)\|^2
		+e_\gamma g_\xi\big(T_\xi(x)\big).
	\end{equation}
	By (ii) and the argument in the proof of Corollary~\ref{cor:moreau_upperC2}, $g_\xi$ is prox-bounded with threshold at least $\gamma^g$, the envelope $e_\gamma g_\xi$ is finite-valued, and \eqref{eq:asplund_decomposition} holds for $g$, with $A_\gamma g$ a finite convex Carath\'eodory integrand. Since $\nabla f$ is Carath\'eodory, \eqref{eq:fbe_moreau_form} shows that $\varphi^{\mathrm{fb}}_\gamma$ is Carath\'eodory, hence a finite-valued normal integrand.
	
	Substituting \eqref{eq:asplund_decomposition} into \eqref{eq:fbe_moreau_form} and expanding $\|T_\xi(x)\|^2$, the terms $\pm\frac\gamma2\|\nabla f_\xi(x)\|^2$ cancel and we obtain
	\begin{equation}\label{eq:fbe_dc}
		\varphi^{\mathrm{fb}}_{\gamma,\xi}=\Phi_\xi-\Psi_\xi
		\quad\text{on } V,
		\qquad
		\begin{cases}
			\Phi_\xi(x):=f_\xi(x)+\dfrac{1}{2\gamma}\|x\|^2-\langle x,\nabla f_\xi(x)\rangle,\\[2mm]
			\Psi_\xi:=A_\gamma g_\xi\circ T_\xi .
		\end{cases}
	\end{equation} On the one hand, by (i), $\Phi_\xi$ is continuously differentiable on $V$ with
	\[
	\nabla \Phi_\xi(x)=\frac{1}{\gamma}x-\nabla^2f_\xi(x)\,x ,
	\]
	so that, for $x,y\in V$,
	\[
	\|\nabla \Phi_\xi(x)-\nabla \Phi_\xi(y)\|
	\leq\frac1\gamma\|x-y\|+\|\nabla^2f_\xi(x)-\nabla^2f_\xi(y)\|\,\|x\|+\|\nabla^2f_\xi(y)\|\,\|x-y\|
	\leq\varrho^1(\xi)\|x-y\|,
	\]
	with $\varrho^1(\xi):=\frac1\gamma+L_\xi+R\Gamma_\xi$. Moreover $\|\nabla \Phi_\xi(x)\|\leq R(\frac1\gamma+L_\xi)$ on $V$. 
	
	On the other hand, by (i), $T_\xi$ is Lipschitz on $V$ with modulus $1 + \gamma L_\xi$, and continuously differentiable on a neighborhood of $V$ with $\nabla T_\xi=I-\gamma\nabla^2f_\xi$. Hence $\|\nabla T_\xi\|\leq1+\gamma L_\xi$ on $V$ and $\nabla T_\xi$ is Lipschitz on $V$ with modulus $\gamma\Gamma_\xi$. Since $1+\gamma L\in L^q_\Pp$, $\gamma\Gamma\in L^q_\Pp$ and $M\in L^p_\Pp$, Proposition~\ref{prop:composite} applies with inner mapping $G:=T$ and outer convex integrand $A_\gamma g$, and yields that $\Psi_\xi$ is Lipschitz on $V$ with modulus $M_\xi(1+\gamma L_\xi)$ and lower-$C^2$ on $V$ with $s$-integrable modulus $\sigma(\xi):=\gamma\Gamma_\xi M_\xi$; that is, $\Psi_\xi+\frac{\sigma(\xi)}{2}\|\cdot\|^2$ is convex on $V$ for every $\xi\in\Xi$.
	
	Next, rewrite \eqref{eq:fbe_dc} as
	\[
	\varphi^{\mathrm{fb}}_{\gamma,\xi}
	=\Big[\Phi_\xi+\frac{\sigma(\xi)}{2}\|\cdot\|^2\Big]
	-\Big[\Psi_\xi+\frac{\sigma(\xi)}{2}\|\cdot\|^2\Big].
	\]
	The first bracket is continuously differentiable on $V$ with gradient Lipschitz of modulus $\varrho^1(\xi)+\sigma(\xi)=\varrho(\xi)$, the second is finite and convex on $V$, and both are Lipschitz on $V$. Corollary~\ref{cor:dc_integrands} then gives the upper-$C^2$ property with curvature modulus \eqref{eq:fbe_modulus}. The local Lipschitz modulus \eqref{eq:fbe_lip} follows directly from \eqref{eq:fbe_dc}, since $\Phi_\xi$ is Lipschitz on $V$ with modulus $R(\frac1\gamma+L_\xi)$ and $\Psi_\xi$ with modulus $M_\xi(1+\gamma L_\xi)$.Finally, $\Pp$ is a probability measure and $q\geq s$, so $L,\Gamma\in L^s_\Pp$, while $\Gamma M$ and $LM$ belong to $L^s_\Pp$ by the generalized H\"older inequality. Thus, $\varrho,\kappa\in L^s_\Pp$.
\end{proof}

\begin{remark}[On the Lipschitz assumption on $A_\gamma g$ in Corollary~\ref{cor:fbe_upperC2}]\label{rem:asplund_lip}
Let \textup{(ii)} of Corollary~\ref{cor:fbe_upperC2} hold, and let $\gamma\in(0,\gamma^g)$. Then, for every set $S\subseteq\R^d$, the function $A_\gamma g_\xi$ is Lipschitz on $S$ with modulus
\begin{equation}\label{eq:Asp-Lip}
    \frac1\gamma\,\sup\big\{\|z\|:\ z\in P_\gamma g_\xi(w),\ w\in S\big\},
\end{equation} where $P_\gamma g_\xi(w):=\argmin_{z}\{g_\xi(z)+\frac{1}{2\gamma}\|z-w\|^2\}$, $w \in \R^d$. First, observe that $P_\gamma g_\xi(w) \neq \emptyset$ for every $w\in\R^d$. Indeed, setting $c_\gamma:=\frac1\gamma-\frac1{\gamma^g}>0$, for all $z\in\R^d$ we have
	\begin{equation}\label{eq:moreau_lower}
		g_\xi(z)+\frac{1}{2\gamma}\|z-w\|^2\ \geq\ \frac{c_\gamma}{2}\|z\|^2+\Big\langle\alpha_\xi-\frac w\gamma,z\Big\rangle+\beta_\xi+\frac{1}{2\gamma}\|w\|^2 .
	\end{equation}
	Since $c_\gamma>0$, the lower semicontinuous function on the left-hand side is level-bounded in $z$, and thus its set of minimizers $P_\gamma g_\xi(w)$ is nonempty. Next,  the supremum defining $A_\gamma g_\xi(w')$ is attained at any $z'\in P_\gamma g_\xi(w')$. Hence \[A_\gamma g_\xi(w')-A_\gamma g_\xi(w)\leq\frac1\gamma\langle z',w'-w\rangle\leq\frac1\gamma\|z'\|\,\|w'-w\|,\] and the claim follows by symmetry.


In particular, we have the following two important examples.
\begin{enumerate}[label=(\arabic*)]
\item  Let $g_\xi=\iota_{K(\xi)}$, the indicator function of $K(\xi)$, where $K:\Xi\setto\R^d$ is measurable with nonempty closed values and $K(\xi)\subseteq k_\xi\B$ with $k\in L^p_\Pp$. Then Corollary~\ref{cor:fbe_upperC2}  (ii) holds with $\alpha\equiv0$, $\beta\equiv0$ and any $\gamma^g>0$. Since $P_\gamma g_\xi(w)$ is the set of projections of $w$ onto $K(\xi)$, which lie in $K(\xi)$, it follows from \eqref{eq:Asp-Lip} that $A_\gamma g_\xi$ is Lipschitz on all of $\R^d$ with modulus $M_\xi=k_\xi/\gamma$.
\item  Let $g_\xi$ be Lipschitz on $\R^d$ with modulus $\ell_\xi$, where $\ell\in L^p_\Pp$ (e.g., $g_\xi=\lambda_\xi\|\cdot\|_1$ and minimax concave penalties \cite{MR2604701,MR1946581}). Since $|g_\xi(z)-g_\xi(0)|\leq\ell_\xi\|z\|\leq\frac{1}{2\gamma^g}\|z\|^2+\frac{\gamma^g}{2}\ell_\xi^2$, Corollary~\ref{cor:fbe_upperC2} (ii) holds with $\alpha\equiv0$, $\beta_\xi=g_\xi(0)-\frac{\gamma^g}{2}\ell_\xi^2$ and any $\gamma^g>0$. For $w\in \R^d$ and $z\in P_\gamma g_\xi(w)$, \[\frac{1}{2\gamma}\|z-w\|^2\le g_\xi(w)-g_\xi(z)\le\ell_\xi\|z-w\|,\]so \begin{equation} \label{eq:cota}
    \|z\|\le\|w\|+2\gamma\ell_\xi \quad \mbox{for all } w \in \R^d, \, z\in P_\gamma g_\xi(w).
\end{equation} Let $\delta >0$ and set $K_\xi := (1+\gamma L_\xi)\delta + \|\bar x\| + \gamma \|\nabla f_\xi(\bar x)\|$. Since $T_\xi$ is Lipschitz on $\B_\delta[\bar x]$ with modulus $1+\gamma L_\xi$ by assumption (i) of Corollary~\ref{cor:fbe_upperC2}, for $y = T_\xi(w')$ with $w' \in \B_\delta[\bar x]$ we have \begin{align*}
    \|y\| &\leq \| T_\xi(w') - T_\xi(\bar x) \| + \| T_\xi(\bar x) \|  \leq (1+\gamma L_\xi) \|w'-\bar x\| + \|\bar x\| + \gamma \|\nabla f_\xi(\bar x)\| \leq K_\xi ,
\end{align*} and thus $T_\xi(\B_\delta[\bar x]) \subseteq \B_{K_\xi}[0] \subseteq \B_{K_\xi+1}(0)$. Furthermore, \eqref{eq:Asp-Lip} and \eqref{eq:cota} imply that $A_\gamma g_\xi$ is Lipschitz on the open convex set $\B_{K_\xi+1}(0)$ with modulus $M_\xi=(K_\xi+1)/\gamma+2\ell_\xi$, which lies in $L^p_\Pp$ provided that $\|\nabla f(\cdot,\bar x)\|, L \in L^p_\Pp$.
\end{enumerate}
\end{remark}

The last result of this section extends Proposition~\ref{prop:composite} to
compositions whose inner mapping is no longer smooth, but has lower-$C^2$
components. Smoothness of the inner mapping is replaced by a monotonicity
requirement on the outer convex function, which guarantees that the ascent
inequalities of the components are preserved under composition. This is the (stochastic)
lower-$C^2$ counterpart of the classical rule that a nondecreasing convex function
of convex functions is convex. It is also the structure of the
conditional value-at-risk studied in Section~\ref{sec:app}, where a nondecreasing
convex function is composed with a nonsmooth loss.

\begin{proposition}[Monotone convex composition]\label{prop:composite_monotone}
	Let $r\in[1,\infty]$ and let $p,q\in[r,\infty]$ satisfy $\frac1p+\frac1q=\frac1r$. Let
	$g=(g_1,\dots,g_m):\Xi\times\R^d\to\R^m$, where each $g_j$ is a finite-valued normal
	integrand, and let $f:\Xi\times\R^m\to\R$ be a finite-valued convex normal integrand such
	that, for every $\xi\in\Xi$, $f_\xi$ is nondecreasing with respect to the componentwise
	order of $\R^m$. Let $\bar x\in\R^d$ and assume that there exist a compact convex
	neighborhood $V$ of $\bar x$, nonnegative functions $\varrho_1,\dots,\varrho_m\in L^q_\Pp$
	and a nonnegative $M\in L^p_\Pp$ such that, for $\Pp$-almost every $\xi\in\Xi$,
	\begin{enumerate}[label=(\roman*)]
		\item for each $j=1,\dots,m$, $g_j$ is locally Lipschitz on a neighborhood of $V$
		and lower-$C^2$ around $\bar x$, with the representation
		\eqref{eq:smooth_value_representation_lower} holding on $V$ with curvature
		modulus $\varrho_j$;
		\item $f_\xi$ is Lipschitz continuous with modulus $M_\xi$ on an open convex set
		containing $g_\xi(V)$.
	\end{enumerate}
	Then $\varphi:=f\circ g$ is a lower-$C^2$ normal integrand around $\bar x$ with
	$r$-integrable curvature modulus $\sigma(\xi):=M_\xi\,\|\varrho(\xi)\|$, where
	$\varrho:=(\varrho_1,\dots,\varrho_m)$. If, in addition, each $g_j$ is Lipschitz on $V$
	with modulus $\kappa_j\in L^q_\Pp$, then $\varphi$ is locally Lipschitz around $\bar x$
	with $r$-integrable modulus $M\|\kappa\|$, $\kappa:=(\kappa_1,\dots,\kappa_m)$.
\end{proposition}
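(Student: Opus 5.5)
The plan is to verify the ascent inequality \eqref{eq:lowerC2_ascent} of Theorem~\ref{thm:eqC2lower}\ref{it:eqC2l-b} for $\varphi_\xi=f_\xi\circ g_\xi$ on $V$ with modulus $\sigma$, and then invoke Theorem~\ref{thm:eqC2lower}. First I would settle the preliminaries. Since each $g_j(\xi,\cdot)$ is finite and continuous on a neighborhood of $V$ and $f_\xi$ is finite convex, hence continuous, $\varphi$ is continuous in $x$ there. Measurability in $\xi$ holds because a composition of normal integrands with a finite convex outer normal integrand of this form is a normal integrand; I would argue via the Carath\'eodory property on $V$, following the start of the proof of Proposition~\ref{prop:composite}. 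Local Lipschitz continuity of $\varphi_\xi$ on a neighborhood of $V$ follows from that of the $g_j$ and of $f_\xi$ near $g_\xi(V)$, shrinking $V$ if needed. If each $g_j$ is $\kappa_j$-Lipschitz on $V$, then
\[
|\varphi_\xi(y)-\varphi_\xi(x)|\le M_\xi\|g_\xi(y)-g_\xi(x)\|\le M_\xi\|\kappa(\xi)\|\,\|y-x\|,
\]
and $M\|\kappa\|\in L^r_\Pp$ by the generalized H\"older inequality. The same inequality gives $\sigma=M\|\varrho\|\in L^r_\Pp$.

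The core step is a subgradient representation. For $x\in V$ and $v\in\partial\varphi_\xi(x)$, I would use Clarke's chain rule for a convex outer function composed with a Lipschitz inner map \cite[Theorem~2.3.9]{clarke1983optimization}. It yields $v\in\conv\{\sum_j u_j w_j\}$ with $u\in\partial f_\xi(g_\xi(x))$ and $w_j\in\partial g_{j,\xi}(x)$. Monotonicity of $f_\xi$ gives $u\ge 0$ componentwise, and Lipschitz continuity near $g_\xi(x)$ gives $\|u\|\le M_\xi$. Since each $g_j$ is lower-$C^2$, hence Clarke regular, and $f_\xi$ is convex and nondecreasing, I expect this to hold with equality. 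Only the inclusion is needed, though. Because the target inequality is linear in $v$, it suffices to prove it for $v=\sum_j u_jw_j$ and then take convex combinations.

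For such $v$, Theorem~\ref{thm:eqC2lower}\ref{it:eqC2l-b} applied to each $g_j$ gives, for $y\in V$,
\[
g_j(\xi,y)\ge g_j(\xi,x)+\langle w_j,y-x\rangle-\tfrac{\varrho_j(\xi)}{2}\|y-x\|^2=:z_j .
\]
Monotonicity then gives $f_\xi(g_\xi(y))\ge f_\xi(z)$. Convexity gives $f_\xi(z)\ge f_\xi(g_\xi(x))+\langle u,z-g_\xi(x)\rangle$. Expanding, this equals $\varphi_\xi(x)+\langle v,y-x\rangle-\tfrac12\langle u,\varrho(\xi)\rangle\|y-x\|^2$. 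Cauchy--Schwarz together with $u\ge0$ gives $\langle u,\varrho\rangle\le M_\xi\|\varrho(\xi)\|=\sigma(\xi)$.

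The main obstacle is the subgradient estimate for $u$. The bound $\|u\|\le M_\xi$ needs the Lipschitz domain of $f_\xi$ to be an open set containing $g_\xi(x)$, which assumption (ii) provides. There is a second, subtler issue: $f_\xi(z)$ must be evaluated at $z$, which may lie outside $g_\xi(V)$. This is harmless because $f_\xi$ is finite and convex on all of $\R^m$, so the subgradient inequality holds globally. With the inequality established, Theorem~\ref{thm:eqC2lower} gives the conclusion.
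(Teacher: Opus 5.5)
Your proposal is correct and follows essentially the same route as the paper. It uses Clarke's chain rule to write $v$ as a convex combination of vectors $\sum_j u_j w_j$ with $\|u\|\le M_\xi$, the componentwise ascent inequalities of the $g_j$ from Theorem~\ref{thm:eqC2lower}, Cauchy--Schwarz and H\"older for $\sigma=M\|\varrho\|$, and then Theorem~\ref{thm:eqC2lower} again to conclude. The only difference is the order of two steps: you apply monotonicity of $f_\xi$ first and then the subgradient inequality for $u$ evaluated at $z$, whereas the paper evaluates it at $g_\xi(y)$ and uses $u\ge 0$ to insert the ascent inequalities; in your ordering the sign condition $u\ge0$ is actually unnecessary, since Cauchy--Schwarz alone gives $\langle u,\varrho(\xi)\rangle\le M_\xi\|\varrho(\xi)\|$.
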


\begin{proof}
Due to \cite[Proposition~14.45(b)]{rockafellarwets1998},  $\varphi$ is a normal integrand. 
	Moreover, it is
	locally Lipschitz continuous on a neighborhood of $V$ as the composition of locally Lipschitz maps.

    Next, fix $\xi \in \Xi$, $x,y\in V$ and $v\in\partial\varphi_\xi(x)$.
	By Clarke's chain rule \cite[Theorem~2.3.9]{clarke1983optimization},
	\[
	v\in\conv\Big\{\textstyle\sum_{j=1}^m\alpha_j\zeta_j:\ \alpha\in\partial f_\xi(g_\xi(x)),\
	\zeta_j\in\partial (g_j)_\xi(x)\Big\},
	\]
	so $v=\sum_{k=1}^N\lambda_k w_k$ for some $N \in \N$, $\lambda_k\geq0$, $\sum_{k=1}^N\lambda_k=1$, and
	$w_k=\sum_{j=1}^m\alpha^k_j\zeta^k_j$ for some $\alpha^k\in\partial f_\xi(g_\xi(x))$ and
	$\zeta^k_j\in\partial(g_j)_\xi(x)$. By assumption (ii), $\|\alpha^k\|\leq M_\xi$. Using monotonicity and the convex
	subgradient inequality at $z:=g_\xi(x)$, for $t>0$ and $e_j$ the $j$-th canonical vector in $\R^m$, \[f_\xi(z) \geq f_\xi(z - te_j) \geq f_\xi(z) - t\alpha_j^k ,\] so that  $\alpha_j^k\geq0$ for every $j =1, \dots, m$. Furthermore, by Theorem~\ref{thm:eqC2lower}, assumption (i) implies that \eqref{eq:lowerC2_ascent} holds on $V$ with modulus $\varrho_j(\xi)$, and thus
	\begin{align*}
		\varphi_\xi(y)-\varphi_\xi(x)
		&\geq\big\langle\alpha^k,g_\xi(y)-g_\xi(x)\big\rangle
		\geq\sum_{j=1}^m\alpha^k_j\Big(\langle\zeta^k_j,y-x\rangle
		-\frac{\varrho_j(\xi)}2\|y-x\|^2\Big)\\
    &= \langle w_k,y-x\rangle-\sum_{j=1}^m\frac{\alpha^k_j\varrho_j(\xi)}2\|y-x\|^2 \geq\langle w_k,y-x\rangle-\frac{M_\xi\|\varrho(\xi)\|}{2}\|y-x\|^2 .
	\end{align*}
	Multiplying by $\lambda_k$ and summing over $k= 1, \dots, N $ gives \eqref{eq:lowerC2_ascent} on $V$
	with modulus $\sigma=M\|\varrho\|$. Furthermore,  by the generalized H\"older
	inequality, it follows that $\|\sigma\|_r\leq\|M\|_p\,\|\,\|\varrho\|\,\|_q$, so that $\sigma\in L^r_\Pp$. The conclusion follows
	from Theorem~\ref{thm:eqC2lower}. Finally, for $x,y\in V$,
	$|\varphi_\xi(y)-\varphi_\xi(x)|\leq M_\xi\|g_\xi(y)-g_\xi(x)\|\leq
	M_\xi\|\kappa(\xi)\|\,\|y-x\|$, and again $M\|\kappa\|\in L^r_\Pp$ by the H\"older inequality.
\end{proof}

\section{Integrability of Lipschitz moduli, values and  subgradients}\label{sec:lipschitz_moduli}

In this section we show that, for lower- and upper-$C^2$ normal integrands, the integrability of the values, of the Lipschitz moduli, and of the Clarke subgradients are all related. In particular, 
we prove some equivalences 
that make the samplewise estimates of Section~\ref{sec:examples_classes} usable for the expected functional \eqref{eq:expected_objective_intro}.

The first result 
extends Proposition~\ref{prop:convex_integrable_values} beyond the convex case. For lower- and upper-$C^2$ normal integrands, $p$-integrability of the values on a neighborhood of the reference point is both 
necessary 
and sufficient for local Lipschitz continuity with a $p$-integrable modulus.

\begin{corollary}  \label{cor:lipschitz_from_integrable_values}
Let $p\in[1,\infty]$, let $\varphi:\Xi\times\R^d\to\R$ be a finite-valued normal integrand, and let $\bar x\in\R^d$. Assume that $\varphi$ is either lower-$C^2$ or upper-$C^2$ around $\bar x$ with $p$-integrable curvature modulus $\varrho$. Then the following assertions are equivalent:
\begin{enumerate}[label=(\alph*)]
	\item\label{it:liv-a} there exists a neighborhood $V$ of $\bar x$ such that $\varphi(\cdot,x)\in L^p_\Pp$ for every $x\in V$;
	\item\label{it:liv-b} $\varphi(\cdot,\bar x)\in L^p_\Pp$ and $\varphi$ is locally Lipschitz around $\bar x$ with $p$-integrable Lipschitz modulus.
\end{enumerate}
\end{corollary}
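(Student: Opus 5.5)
The plan is to prove the two implications separately. The direction \ref{it:liv-b}$\Rightarrow$\ref{it:liv-a} is elementary and does not use the curvature structure. The direction \ref{it:liv-a}$\Rightarrow$\ref{it:liv-b} reduces to the convex case through the convex-minus-quadratic representation of Theorem~\ref{thm:eqC2lower}\ref{it:eqC2l-d}, followed by an application of Proposition~\ref{prop:convex_integrable_values}.

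For \ref{it:liv-b}$\Rightarrow$\ref{it:liv-a}, let $W$ and $\kappa\in L^p_\Pp$ be as in Definition~\ref{def:locLip_integrable}. For $x\in W$ and almost every $\xi$,
\[
|\varphi_\xi(x)|\leq|\varphi_\xi(\bar x)|+\kappa(\xi)\|x-\bar x\|.
\]
The right-hand side lies in $L^p_\Pp$ because $\varphi(\cdot,\bar x)\in L^p_\Pp$ and $\kappa\in L^p_\Pp$. Measurability of $\xi\mapsto\varphi_\xi(x)$ holds since $\varphi$ is a normal integrand. Hence \ref{it:liv-a} holds with $V:=W$.

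For \ref{it:liv-a}$\Rightarrow$\ref{it:liv-b}, I first treat the lower-$C^2$ case. The upper-$C^2$ case then follows by applying that argument to $-\varphi$: conditions \ref{it:liv-a} and \ref{it:liv-b} are invariant under a change of sign, and Corollary~\ref{cor:eqC2upper} shows that $-\varphi$ is lower-$C^2$ with the same modulus $\varrho$.

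By Theorem~\ref{thm:eqC2lower}\ref{it:eqC2l-d}, there are a compact convex neighborhood $V'$ of $\bar x$ and a finite convex normal integrand $f$ with
\[
f_\xi=\varphi_\xi+\frac{\varrho(\xi)}{2}\|\cdot\|^2 \quad\text{on } V',
\]
with the same $\varrho\in L^p_\Pp$. I intersect $V'$ with the neighborhood from \ref{it:liv-a} and take a small closed ball $U:=\B_{\delta}[\bar x]$ inside this intersection. Then $U$ is a convex neighborhood of $\bar x$. For every $x\in U$, the function $f(\cdot,x)=\varphi(\cdot,x)+\tfrac{\varrho}{2}\|x\|^2$ lies in $L^p_\Pp$, as a sum of $L^p_\Pp$ functions with $\|x\|^2$ a constant.

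Proposition~\ref{prop:convex_integrable_values} applied to $f$ on $U$ gives a neighborhood $W\subseteq U$ of $\bar x$ and a modulus $\kappa_f\in L^p_\Pp$ such that $f_\xi$ is $\kappa_f(\xi)$-Lipschitz on $W$. This uses only the values of $f$ at finitely many points of $U$, where $f$ and $\varphi+\tfrac{\varrho}{2}\|\cdot\|^2$ agree. On $W\subseteq\B_\delta[\bar x]$, the quadratic $\tfrac{\varrho(\xi)}{2}\|\cdot\|^2$ is Lipschitz with modulus $\varrho(\xi)(\|\bar x\|+\delta)$. Therefore $\varphi_\xi$ is Lipschitz on $W$ with modulus
\[
\kappa:=\kappa_f+(\|\bar x\|+\delta)\varrho\in L^p_\Pp.
\]
Together with $\varphi(\cdot,\bar x)\in L^p_\Pp$, which holds trivially since $\bar x\in U$, this gives \ref{it:liv-b}.

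The step I expect to require the most care is the bookkeeping between neighborhoods. The representation in Theorem~\ref{thm:eqC2lower}\ref{it:eqC2l-d} holds only on $V'$, while Proposition~\ref{prop:convex_integrable_values} needs integrable values on a convex neighborhood. One therefore has to check that its proof only evaluates $f$ at points inside $U$: the simplex vertices $q_i$ and the center $\bar x$ can be chosen in $U$. One should also confirm that the resulting modulus $\tfrac{3}{2r}\mathfrak m$ is measurable, which holds because it is a finite maximum of measurable functions. The null sets coming from the almost-sure representation and from \ref{it:liv-a} at the finitely many points used are absorbed as in the paper's standing convention.
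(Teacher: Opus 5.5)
Your proposal is correct and follows the paper's proof essentially step for step. The paper also reduces to the lower-$C^2$ case by a change of sign, proves \ref{it:liv-a}$\Rightarrow$\ref{it:liv-b} via the convex-minus-quadratic representation of Theorem~\ref{thm:eqC2lower}\ref{it:eqC2l-d} combined with Proposition~\ref{prop:convex_integrable_values}, and obtains the converse from the elementary bound $|\varphi_\xi(x)|\leq|\varphi_\xi(\bar x)|+\kappa(\xi)\|x-\bar x\|$. Your extra bookkeeping about where Proposition~\ref{prop:convex_integrable_values} evaluates $f$ is harmless but not needed: $f$ is a globally defined convex normal integrand, and that proposition only requires integrable values on the convex neighborhood $U$.
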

\begin{proof}
Both \ref{it:liv-a} and \ref{it:liv-b} are invariant under the change of sign $\varphi\mapsto-\varphi$, which interchanges the lower- and upper-$C^2$ properties while preserving the curvature modulus. We may therefore assume that $\varphi$ is lower-$C^2$ around $\bar x$.

Suppose that \ref{it:liv-a} holds. Shrinking $V$ if necessary, we may assume that it is a compact convex neighborhood of $\bar x$ on which, by Theorem~\ref{thm:eqC2lower}\ref{it:eqC2l-d}, the representation
\[
	\varphi_\xi(x)=f_\xi(x)-\frac{\varrho(\xi)}{2}\|x\|^2
	\qquad\text{for all } x\in V
\]
holds with a convex normal integrand $f$. Since $\varrho\in L^p_\Pp$ and $V$ is bounded, the function $\xi\mapsto\tfrac{\varrho(\xi)}{2}\|x\|^2$ belongs to $L^p_\Pp$ for every $x\in V$. Hence, $f(\cdot,x)\in L^p_\Pp$ for every $x\in V$. Proposition~\ref{prop:convex_integrable_values} then shows that $f$ is locally Lipschitz around $\bar x$ with $p$-integrable Lipschitz modulus. As $x\mapsto\tfrac{\varrho(\xi)}{2}\|x\|^2$ is Lipschitz on $V$ with modulus $\varrho(\xi)\sup_{u\in V}\|u\|$, which is again $p$-integrable, the same is true of $\varphi$. Finally, $\varphi(\cdot,\bar x)\in L^p_\Pp$ because $\bar x\in V$, so \ref{it:liv-b} holds.

Conversely, suppose that \ref{it:liv-b} holds, and let $V$ be a neighborhood of $\bar x$ on which $\varphi_\xi$ is Lipschitz with a $p$-integrable modulus $\kappa$. Then
\[
	|\varphi_\xi(x)|\leq|\varphi_\xi(\bar x)|+\kappa(\xi)\|x-\bar x\|
	\qquad\text{for all } x\in V ,
\]
and the right-hand side is $p$-integrable for each fixed $x\in V$, since $\varphi(\cdot,\bar x)\in L^p_\Pp$ and $\kappa\in L^p_\Pp$. Hence $\varphi(\cdot,x)\in L^p_\Pp$ for every $x\in V$, which is \ref{it:liv-a}.
\end{proof}
    
Combining Corollary~\ref{cor:lipschitz_from_integrable_values} with the ascent inequality \eqref{eq:lowerC2_ascent} yields a criterion in which the $p$-integrability of the values on a whole neighborhood is replaced by a one-sided bound together with a condition at the reference point alone, namely, the minimum-norm Clarke subgradient at $\bar x$ must be $p$-integrable.

\begin{corollary}\label{cor:lip_modulus_subgrad}
Let $p\in[1,\infty]$, let $\varphi:\Xi\times\R^d\to\R$ be a finite-valued normal integrand, and let $\bar x\in\R^d$ be such that $\varphi(\cdot,\bar x)\in L^p_\Pp$. Assume that, for some $\varepsilon>0$ and with $V:=\B_\varepsilon[\bar x]$, one of the following conditions holds:
\begin{enumerate}[label=(\roman*)]
	\item\label{it:lms-lower} $\varphi$ is a lower-$C^2$ normal integrand around $\bar x$ with $p$-integrable curvature modulus $\varrho$, the ascent inequality \eqref{eq:lowerC2_ascent} holds on $V$, and $\xi\mapsto\sup_{u\in V}\varphi_\xi(u)$ belongs to $L^p_\Pp$;
	\item\label{it:lms-upper} $\varphi$ is an upper-$C^2$ normal integrand around $\bar x$ with $p$-integrable curvature modulus $\varrho$, the descent inequality \eqref{eq:upperC2_descent} holds on $V$, and $\xi\mapsto\inf_{u\in V}\varphi_\xi(u)$ belongs to $L^p_\Pp$.
\end{enumerate}
Then $\varphi$ is locally Lipschitz around $\bar x$ with $p$-integrable Lipschitz modulus if and only if
\begin{equation}\label{eq:subgrad_integrable_at_bar_x}
	\xi\mapsto \dist\big(0,\partial\varphi_\xi(\bar x)\big)
	\quad\text{belongs to } L^p_\Pp .
\end{equation}
\end{corollary}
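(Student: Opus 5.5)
We first reduce to case \ref{it:lms-lower}: replacing $\varphi$ by $-\varphi$ turns \ref{it:lms-upper} into \ref{it:lms-lower}. Indeed, $\sup_{u\in V}(-\varphi_\xi)(u)=-\inf_{u\in V}\varphi_\xi(u)$, the descent inequality for $\varphi$ becomes the ascent inequality for $-\varphi$ because $\partial(-\varphi_\xi)(x)=-\partial\varphi_\xi(x)$, and $\dist(0,\partial\varphi_\xi(\bar x))$ as well as the Lipschitz moduli are unchanged. So assume \ref{it:lms-lower} holds.

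\emph{Necessity.} Suppose $\varphi_\xi$ is Lipschitz on a neighborhood $W$ of $\bar x$ with modulus $\kappa\in L^p_\Pp$. By \cite[Proposition~2.1.2]{clarke1983optimization}, $\partial\varphi_\xi(\bar x)\subseteq\kappa(\xi)\B$, so $\dist(0,\partial\varphi_\xi(\bar x))\le\kappa(\xi)$. It remains to check measurability of $\xi\mapsto\dist(0,\partial\varphi_\xi(\bar x))$. For this we use the measurability of $\xi\mapsto\partial\varphi_\xi(\bar x)$, which follows from the measurability of $\xi\mapsto\gph\partial\varphi_\xi$ (\cite[Theorem~14.56]{rockafellarwets1998}, as in the proof of Theorem~\ref{thm:eqC2lower}). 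We then conclude with \cite[Theorem~14.37]{rockafellarwets1998}, or alternatively write the distance as the infimum of a normal integrand. This direction needs none of the structural hypotheses.

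\emph{Sufficiency.} Let $v_\xi$ be the minimum-norm element of $\partial\varphi_\xi(\bar x)$, so that $\|v_\xi\|=\dist(0,\partial\varphi_\xi(\bar x))\in L^p_\Pp$. We apply the ascent inequality \eqref{eq:lowerC2_ascent} with base point $x=\bar x$ and $v=v_\xi$: for every $y\in V$,
\[
\varphi_\xi(y)\ge\varphi_\xi(\bar x)-\|v_\xi\|\varepsilon-\tfrac{\varrho(\xi)}{2}\varepsilon^2 .
\]
The right-hand side is in $L^p_\Pp$, giving a $p$-integrable lower bound for $\varphi_\xi$ on $V$. By hypothesis, $\xi\mapsto\sup_{u\in V}\varphi_\xi(u)$ is a $p$-integrable upper bound. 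Hence $|\varphi_\xi(y)|$ is dominated on $V$, uniformly in $y$, by a $p$-integrable function. Since $\varphi$ is a normal integrand, each $\varphi(\cdot,y)$ is measurable, and therefore $\varphi(\cdot,y)\in L^p_\Pp$ for every $y$ in the neighborhood $V$ of $\bar x$. This is item \ref{it:liv-a} of Corollary~\ref{cor:lipschitz_from_integrable_values}, which then gives local Lipschitz continuity around $\bar x$ with $p$-integrable modulus.

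The main technical point is the measurability of the minimum-norm subgradient in the necessity direction; the estimates themselves are routine. If the graph-measurability route proves cumbersome, there is an alternative that avoids measurability altogether: take a convex representation $\varphi_\xi=f_\xi-\frac{\varrho}{2}\|\cdot\|^2$ via Theorem~\ref{thm:eqC2lower}\ref{it:eqC2l-d}, so that $\partial\varphi_\xi(\bar x)=\partial f_\xi(\bar x)-\varrho(\xi)\bar x$, and then use the fact that the norm is dominated by the integrable modulus together with completeness of $\Pp$.
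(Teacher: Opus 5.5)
Your proof is correct and follows the paper's argument essentially step by step. The sign change reduces to case \ref{it:lms-lower}; the ascent inequality at $\bar x$ with the minimum-norm subgradient gives a $p$-integrable lower bound on $V$, which pairs with the upper bound $\sup_{u\in V}\varphi_\xi(u)$; and Corollary~\ref{cor:lipschitz_from_integrable_values} concludes, while your measurability check in the necessity direction addresses a point the paper passes over.

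The one misstep is the fallback in your last paragraph. Domination by an integrable function together with completeness of $\Pp$ does not give measurability; the indicator of a non-measurable set is a counterexample. That route would instead need measurability of $\xi\mapsto\partial f_\xi(\bar x)$ for the convex normal integrand $f$, for instance as the countable intersection over $y\in\mathbb{Q}^d$ of the closed half-spaces $\{v:\ f_\xi(y)\geq f_\xi(\bar x)+\langle v,y-\bar x\rangle\}$.
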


\begin{proof}
Necessity is immediate: if $\varphi$ is locally Lipschitz around $\bar x$ with $p$-integrable Lipschitz modulus $\kappa$, then $\|v\|\leq\kappa(\xi)$ for every $\xi\in\Xi$ and every $v\in\partial\varphi_\xi(\bar x)$, so that $\dist(0,\partial\varphi_\xi(\bar x))\leq\kappa(\xi)$ and \eqref{eq:subgrad_integrable_at_bar_x} follows.

For the converse, observe that \ref{it:lms-lower} and \ref{it:lms-upper} may be exchanged by the change of sign $\varphi\mapsto-\varphi$, which affects neither \eqref{eq:subgrad_integrable_at_bar_x} nor the conclusion, since $\dist(0,\partial(-\varphi_\xi)(\bar x))=\dist(0,\partial\varphi_\xi(\bar x))$. We may therefore assume without loss of generality that \ref{it:lms-lower} holds.

Suppose that \eqref{eq:subgrad_integrable_at_bar_x} holds, fix $\xi\in\Xi$, and let $\Pi(\bar x,\xi)$ be the projection of $0$ onto the nonempty compact convex set $\partial\varphi_\xi(\bar x)$, so that $\|\Pi(\bar x,\xi)\|=\dist(0,\partial\varphi_\xi(\bar x))$. Applying \eqref{eq:lowerC2_ascent} at $x=\bar x$ with $y\in V$ and $v=\Pi(\bar x,\xi)$, we obtain
\[
	\varphi_\xi(\bar x)
	-\dist\big(0,\partial\varphi_\xi(\bar x)\big)\varepsilon
	-\frac{\varrho(\xi)}{2}\varepsilon^2
	\;\leq\;
	\varphi_\xi(y)
	\;\leq\;
	\sup_{u\in V}\varphi_\xi(u)
	\qquad\text{for all } y\in V .
\]
Both outer bounds are independent of $y$ and $p$-integrable, so there exists $r\in L^p_\Pp$ with $|\varphi_\xi(y)|\leq r(\xi)$ for all $y\in V$. In particular $\varphi(\cdot,y)\in L^p_\Pp$ for every $y\in V$, and Corollary~\ref{cor:lipschitz_from_integrable_values} gives the conclusion.
\end{proof}

The previous result suggests studying the relation between integrability of the subgradients and Lipschitz continuity of the integrand \rev{over open convex sets}. For lower- and upper-$C^2$ normal integrands, \rev{the next result states that} a pointwise $p$-integrability condition on the minimum-norm Clarke subgradient already forces a locally uniform $p$-integrable bound on the entire subdifferential. Note also that the condition is imposed at each point separately, whereas the conclusion is local Lipschitz continuity with a single modulus valid on a whole neighborhood, and the $p$-integrability of the values is needed at one point only.

\begin{corollary}\label{cor:lowerC2_integrand_lip}
Let $p\in[1,\infty]$, let $\varphi:\Xi\times\R^d\to\R$ be a finite-valued normal integrand, and let $U\subseteq\R^d$ be an open convex set containing a point $\bar x$ with $\varphi(\cdot,\bar x)\in L^p_\Pp$. Assume that one of the following conditions holds:
\begin{enumerate}[label=(\roman*)]
	\item\label{it:lcl-lower} $\varphi$ is lower-$C^2$ around every point of $U$ with $p$-integrable curvature modulus;
	\item\label{it:lcl-upper} $\varphi$ is upper-$C^2$ around every point of $U$ with $p$-integrable curvature modulus.
\end{enumerate}
Then the following assertions are equivalent:
\begin{enumerate}[label=(\alph*)]
	\item\label{it:lcl-a} for every $x\in U$,
	\begin{equation}\label{eq_X}
		\E\left[
		\dist\big(0,\partial\varphi_\xi(x)\big)^p
		\right]
		<+\infty ;
	\end{equation}
	\item\label{it:lcl-b} $\varphi$ is locally Lipschitz continuous around every point of $U$ with $p$-integrable Lipschitz modulus.
\end{enumerate}
\end{corollary}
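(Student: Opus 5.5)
The implication \ref{it:lcl-b}$\Rightarrow$\ref{it:lcl-a} is immediate and mirrors the necessity part of Corollary~\ref{cor:lip_modulus_subgrad}. If $\kappa_x\in L^p_\Pp$ is a local Lipschitz modulus around $x\in U$, then $\|v\|\leq\kappa_x(\xi)$ for all $v\in\partial\varphi_\xi(x)$. Hence $\dist(0,\partial\varphi_\xi(x))\leq\kappa_x(\xi)$, and \eqref{eq_X} follows.

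For \ref{it:lcl-a}$\Rightarrow$\ref{it:lcl-b}, I first reduce to case \ref{it:lcl-lower} by the change of sign $\varphi\mapsto-\varphi$. This is legitimate because $\partial(-\varphi_\xi)(x)=-\partial\varphi_\xi(x)$, so neither \eqref{eq_X} nor \ref{it:lcl-b} changes. The goal is then to show that $\varphi(\cdot,x)\in L^p_\Pp$ for every $x\in U$. Once this holds, Corollary~\ref{cor:lipschitz_from_integrable_values}, applied around each point of the open set $U$, gives local Lipschitz continuity with a $p$-integrable modulus.

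The key tool is a two-sided transfer estimate. Fix $z\in U$. By Theorem~\ref{thm:eqC2lower}\ref{it:eqC2l-b}, there are a compact convex neighborhood $V_z\subseteq U$ of $z$ and $\varrho_z\in L^p_\Pp$ such that \eqref{eq:lowerC2_ascent} holds on $V_z$. Take $x,y\in V_z$, and let $\Pi(x,\xi)$, $\Pi(y,\xi)$ denote the minimum-norm elements of $\partial\varphi_\xi(x)$ and $\partial\varphi_\xi(y)$. Applying \eqref{eq:lowerC2_ascent} once at $x$ with $v=\Pi(x,\xi)$, and once at $y$ with $v=\Pi(y,\xi)$ and the roles of $x$ and $y$ swapped, yields
\[
\varphi_\xi(x)-\dist\big(0,\partial\varphi_\xi(x)\big)\|y-x\|-\tfrac{\varrho_z(\xi)}{2}\|y-x\|^2
\leq\varphi_\xi(y)\leq
\varphi_\xi(x)+\dist\big(0,\partial\varphi_\xi(y)\big)\|y-x\|+\tfrac{\varrho_z(\xi)}{2}\|y-x\|^2 .
\]
By \eqref{eq_X}, both distance terms lie in $L^p_\Pp$, and $\varrho_z\in L^p_\Pp$ as well. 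Consequently, within any such $V_z$, $\varphi(\cdot,x)\in L^p_\Pp$ holds if and only if $\varphi(\cdot,y)\in L^p_\Pp$.

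Now let $G:=\{x\in U:\varphi(\cdot,x)\in L^p_\Pp\}$, which contains $\bar x$ by hypothesis. I conclude with a connectedness argument.
\begin{itemize}
\item $G$ is open: if $x\in G$, use $V_x$; the transfer estimate shows that all of $V_x$ lies in $G$.
\item $G$ is relatively closed in $U$: if $y\in U$ is a limit of points of $G$, then $V_y$ contains some $x\in G$, and the transfer estimate gives $y\in G$.
\end{itemize}
Since $U$ is convex, hence connected, it follows that $G=U$.

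The main point of care is that the curvature modulus and the validity of \eqref{eq:lowerC2_ascent} are only local. The argument therefore has to apply the inequality inside a single neighborhood $V_z$ that contains both points. This is why I propagate integrability through the open/closed argument rather than along a straight segment with one global modulus. I will also take for granted the measurability of $\xi\mapsto\dist(0,\partial\varphi_\xi(x))$, which is implicit in \eqref{eq_X}.
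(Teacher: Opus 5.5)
Your proof is correct. It shares the paper's skeleton: the sign reduction, the set of points with $p$-integrable values, and a connectedness argument whose relative-closedness step is the paper's two-sided use of \eqref{eq:lowerC2_ascent} with minimum-norm subgradients. Where you differ is in how you obtain openness and the final Lipschitz conclusion.

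The paper proves openness of $\tilde U$ as follows. It places a simplex $\Delta$ around $\hat x$ and applies the two-sided estimate only at the $d+1$ vertices. It then uses the convex representation \eqref{eq:convex_minus_quadratic} to bound $\sup_{u\in\Delta}\varphi_\xi(u)$ by a $p$-integrable function, uniformly in $u$. That uniform bound is what Corollary~\ref{cor:lip_modulus_subgrad} needs, so the paper gets the Lipschitz property at each point of $\tilde U$ before knowing that $\tilde U=U$.

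You observe instead that the same transfer estimate that gives closedness also gives openness, because \ref{it:lcl-a} is available at every point of $V_x$. Pointwise integrability of the values therefore suffices. You then defer the Lipschitz conclusion to Corollary~\ref{cor:lipschitz_from_integrable_values}, where Proposition~\ref{prop:convex_integrable_values} handles the passage from pointwise to uniform bounds.

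Your route is shorter and symmetric, with one estimate giving both openness and closedness, and it avoids the simplex construction entirely. The paper's route makes the uniform local bound explicit. Once $\hat x\in\tilde U$, it needs \eqref{eq_X} only at $\hat x$ and the finitely many vertices to conclude the Lipschitz property there.
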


  \begin{proof}
As before, both \ref{it:lcl-a} and \ref{it:lcl-b} are invariant under the change of sign $\varphi\mapsto-\varphi$, which interchanges \ref{it:lcl-lower} and \ref{it:lcl-upper}. We may therefore assume without loss of generality that $\varphi$ is lower-$C^2$ around every point of $U$.

Suppose that \ref{it:lcl-a} holds and consider the set
\begin{equation}\label{def:U}
	\tilde U:=\big\{x\in U:\ \varphi(\cdot,x)\in L^p_\Pp\big\},
\end{equation}
which is nonempty because $\bar x\in\tilde U$. For $w\in U$, let $\Pi(w,\xi)$ denote, as in the proof of Corollary~\ref{cor:lip_modulus_subgrad}, the projection of $0$ onto the nonempty compact convex set $\partial\varphi_\xi(w)$, so that $\|\Pi(w,\xi)\|=\dist(0,\partial\varphi_\xi(w))$. By \eqref{eq_X}, the map $\xi\mapsto\|\Pi(w,\xi)\|$ belongs to $L^p_\Pp$ for every fixed $w\in U$.

\rev{First, we show that} $\varphi$ is locally Lipschitz around every point of \rev{the open set} $\tilde U$ with $p$-integrable Lipschitz modulus. Fix $\hat x\in\tilde U$ and let $W\subseteq U$ be a compact convex neighborhood of $\hat x$ on which \eqref{eq:lowerC2_ascent} and \eqref{eq:convex_minus_quadratic} hold with a nonnegative modulus $\varrho\in L^p_\Pp$ and a convex normal integrand $f$. Let $e_0,\ldots,e_d \in \R^d$ be affinely independent points such that $\Delta:=\conv\{e_0,\ldots,e_d\}\subseteq W$ and $\hat x\in\inter\Delta$.

Applying \eqref{eq:lowerC2_ascent} first at $x=\hat x$, $y=e_i$, $v=\Pi(\hat x,\xi)$, and then at $x=e_i$, $y=\hat x$, $v=\Pi(e_i,\xi)$, we obtain, for each $i=0,1,\ldots,d$,
\[
	\varphi_\xi(\hat x)
	+\big\langle \Pi(\hat x,\xi),e_i-\hat x\big\rangle
	-\frac{\varrho(\xi)}{2}\|e_i-\hat x\|^2
	\leq
	\varphi_\xi(e_i)
	\leq
	\varphi_\xi(\hat x)
	+\big\langle \Pi(e_i,\xi),e_i-\hat x\big\rangle
	+\frac{\varrho(\xi)}{2}\|e_i-\hat x\|^2 .
\]
Since $\varphi(\cdot,\hat x)\in L^p_\Pp$, it follows that $\varphi(\cdot,e_i)\in L^p_\Pp$ for every $i=0,1,\ldots,d$.

Let now $w\in\Delta$, written as a convex combination $w=\sum_{i=0}^d\lambda_ie_i$ with $\lambda_i\geq0$ and $\sum_i\lambda_i=1$. By \eqref{eq:convex_minus_quadratic} and the convexity of $f_\xi$,
\[
	\varphi_\xi(w)
	=f_\xi(w)-\frac{\varrho(\xi)}{2}\|w\|^2
	\leq\sum_{i=0}^d\lambda_if_\xi(e_i)
	=\sum_{i=0}^d\lambda_i\Big(\varphi_\xi(e_i)+\frac{\varrho(\xi)}{2}\|e_i\|^2\Big),
\]
while \eqref{eq:lowerC2_ascent} at $x=\hat x$, $y=w$ and $v=\Pi(\hat x,\xi)$ yields
\[
	\varphi_\xi(w)
	\geq
	\varphi_\xi(\hat x)
	+\big\langle \Pi(\hat x,\xi),w-\hat x\big\rangle
	-\frac{\varrho(\xi)}{2}\|w-\hat x\|^2 .
\]
Both bounds of $\varphi_\xi(w)$ are dominated by $p$-integrable functions independent of $w$, so $\varphi(\cdot,w)\in L^p_\Pp$ for every $w\in\Delta$. Hence $\Delta\subseteq\tilde U$, and since $\hat x\in\inter\Delta$, the set $\tilde U$ is a neighborhood of $\hat x$. Moreover, $\xi\mapsto\sup_{u\in\Delta}\varphi_\xi(u)$ is $p$-integrable, so \rev{in view of \ref{it:lcl-a},} Corollary~\ref{cor:lip_modulus_subgrad} shows that $\varphi$ is locally Lipschitz around $\hat x$ with $p$-integrable Lipschitz modulus.

\rev{We next show that} $\tilde U=U$. Suppose, on the contrary, that $U\setminus\tilde U\neq\emp$. Since $U$ is nonempty, open, and convex, hence connected,  
the set $\tilde U$ cannot be relatively closed in $U$. Therefore, there exists $y\in U\cap\bd\tilde U$, and $y\notin\tilde U$ because $\tilde U$ is open. As $\varphi$ is lower-$C^2$ around $y$, there exist a neighborhood $V\subseteq U$ of $y$ and a nonnegative $\varrho\in L^p_\Pp$ such that \eqref{eq:lowerC2_ascent} holds on $V$. Since $y\in\bd\tilde U$, we may pick $\hat x\in\tilde U\cap V$. Applying \eqref{eq:lowerC2_ascent} at $x=\hat x$, $y$ and then at $x=y$, $\hat x$ gives
\[
	\varphi_\xi(\hat x)
	+\big\langle \Pi(\hat x,\xi),y-\hat x\big\rangle
	-\frac{\varrho(\xi)}{2}\|y-\hat x\|^2
	\leq
	\varphi_\xi(y)
	\leq
	\varphi_\xi(\hat x)
	+\big\langle \Pi(y,\xi),y-\hat x\big\rangle
	+\frac{\varrho(\xi)}{2}\|y-\hat x\|^2 ,
\]
thus $\varphi(\cdot,y)\in L^p_\Pp$, that is, $y\in\tilde U$, which yields a contradiction. In this manner, $\tilde U=U$, and \ref{it:lcl-b} follows.

\smallskip
Conversely, assume that \ref{it:lcl-b} holds and fix $x\in U$. Let $\kappa\in L^p_\Pp$ be a Lipschitz modulus of $\varphi$ on a neighborhood of $x$. Then $\|v\|\leq\kappa(\xi)$ for every $\xi\in\Xi$ and every $v\in\partial\varphi_\xi(x)$, so $\dist(0,\partial\varphi_\xi(x))\leq\kappa(\xi)$ and \eqref{eq_X} follows.
\end{proof}

\begin{remark}[On bounded-variance and multiplicative-noise assumptions]
	\label{rem:variance}
	In smooth stochastic optimization, it is common to impose square-integrability of the stochastic gradient noise. For instance, one assumes that
	\[
	\sigma(x):=
	\left(
	\E\big[
	\|\nabla_x\varphi(\xi,x)-\nabla \Intf{\varphi}(x)\|^2
	\big]
	\right)^{1/2}
	<+\infty,
	\]
	or that a \emph{multiplicative-noise condition} of the form
	\begin{equation}\label{eq:mult-noise}
		\sigma(x)\leq \sigma(\hat x)+\sigma_L\|x-\hat x\|,
		\qquad x\in U,
	\end{equation}
	holds for some $\hat x\in U$ and some constant $\sigma_L>0$. It is straightforward to show that either condition implies
	\[
	\E\big[\|\nabla_x\varphi(\xi,x)\|^2\big]<+\infty
	\quad\text{for all }x\in U, 
	\] and thus \eqref{eq_X} holds for $p = 2$. Hence, under the assumptions of Corollary~\ref{cor:lowerC2_integrand_lip} with $p=2$, $\varphi$ is locally Lipschitz continuous around every point of $U$ with square-integrable Lipschitz modulus.
	
\end{remark}



\section{Applications in structured optimization} \label{sec:app}

In this section, we describe two structured applications that fit our framework of nonconvex normal integrands.

\subsection{Partial-label regression}\label{subsec:partial_label} 

In partial-label regression,  each observation is attached to a set of (real-valued) candidate labels that is known to contain the true one \cite{cheng2023partial,hullermeier2014learning}. A way of fitting a model to such data is to be optimistic with respect to the ambiguity, namely, the model chooses the most favorable among all candidate label assignments. We show that the resulting optimistic distributionally robust formulation reduces to the expectation of an upper-$C^2$ normal integrand. %

Let $m\in\N$. The data consist of a measurable set-valued mapping $S:\Xi\setto\R^m$ with nonempty closed values, representing the candidates, and a Carath\'eodory mapping $G:\Xi\times\R^d\to\R^m$, the prediction model. A typical instance is $G_\xi(\cdot)=h(\cdot;a(\xi))$, where $a(\xi)$ is a random feature vector and $h$ is a parametric model. For the squared residual, the optimistic loss at $x \in \R^d$ of a sample $\xi$ is
\begin{equation}\label{eq:pl_integrand}
	\varphi_\xi(x):=\min_{y\in S(\xi)}\frac12\|G_\xi(x)-y\|^2=\frac12\dist\big(G_\xi(x),S(\xi)\big)^2 .
\end{equation}
Let $\mathcal Q$ be the set of probability measures on $\Xi\times\R^m$, endowed with the product $\sigma$-algebra $\mathcal F\otimes\mathcal B(\R^m)$, consisting of all joint laws of observations and labels that reproduce the observed data and assign to each observation a (possibly randomized) label among its candidates. In other words, $Q \in \mathcal Q$ if
\begin{equation}\label{eq:pl_ambiguity}
	Q(D\times\R^m)=\Pp(D)\quad\text{for all } D\in\mathcal F
	\qquad\text{and}\qquad
	Q(\gph S)=1 .
\end{equation}Notice that $\gph S\in\mathcal F\otimes\mathcal B(\R^m)$ because $S$ is measurable with closed values \cite[Theorem~14.8]{rockafellarwets1998}.

For a linear map $A$, $\|A\|$ denotes the operator norm, and for $x\in\R^d$ we regard $\nabla^2G_\xi(x)$ as a symmetric bilinear map $\R^d\times\R^d\to\R^m$ with norm
\[
\|\nabla^2G_\xi(x)\|:=\sup\big\{\|\nabla^2G_\xi(x)[u,w]\|:\ \|u\|\leq1,\ \|w\|\leq1\big\}.
\]

\begin{proposition}\label{prop:partial_label}
	Let $S$, $G$ and $\varphi$ be as above. Fix $\bar x\in\R^d$ and $\delta>0$, set $V:=\B_\delta[\bar x]$, and assume that, for every $\xi\in\Xi$, the mapping $G_\xi$ is twice-continuously differentiable on an open set containing $V$, and that there exist measurable functions $K_1,K_2:\Xi\to[0,+\infty)$ such that
	\begin{equation}\label{eq:pl_bounds_G}
		\|\nabla G_\xi(x)\|\leq K_1(\xi),
		\qquad
		\|\nabla^2 G_\xi(x)\|\leq K_2(\xi)
		\qquad\text{for all } x\in V,\ \xi\in\Xi .
	\end{equation}
	Set $r_0(\xi):=\dist\big(G_\xi(\bar x),S(\xi)\big)$. Then the following assertions hold.
	\begin{enumerate}[label=(\alph*)]
		\item\label{it:pl-a} For every $x\in\R^d$,
		\begin{equation}\label{eq:pl_dro_identity}
			\inf_{Q\in\mathcal Q}\int_{\Xi\times\R^m}\frac12\|G_\xi(x)-y\|^2\,Q\big(d(\xi,y)\big)
			=\Intf{\varphi}(x),
		\end{equation}
		where both sides belong to $[0,+\infty]$ and the infimum is attained.
		
		\item\label{it:pl-b} The function $\varphi$ is a finite-valued normal integrand which is upper-$C^2$ around $\bar x$ in the sense of Definition~\ref{def:upperC2normal}, with Polish space $X:=\R^m$, compact-valued measurable mapping
		\begin{equation}\label{eq:pl_C}
			C(\xi):=S(\xi)\cap\B_{\tau(\xi)}\big[G_\xi(\bar x)\big],
			\quad \mbox{where} \quad
			\tau(\xi):=r_0(\xi)+2\delta K_1(\xi),
		\end{equation}
		Carath\'eodory mapping $g(\xi,y,x):=\frac12\|G_\xi(x)-y\|^2$, and curvature modulus
		\begin{equation}\label{eq:pl_rho}
			\varrho(\xi):=K_1(\xi)^2+K_2(\xi)\big(r_0(\xi)+3\delta K_1(\xi)\big).
		\end{equation}
		
		\item\label{it:pl-c} For every $\xi\in\Xi$, the function $\varphi_\xi$ is Lipschitz continuous on $V$ with modulus
		\begin{equation}\label{eq:pl_kappa}
			\kappa(\xi):=K_1(\xi)\big(r_0(\xi)+\delta K_1(\xi)\big).
		\end{equation}
		
		\item\label{it:pl-d} Let $p\in[1,\infty]$. If $K_1,r_0\in L^{2p}_\Pp$ and $K_2r_0,\,K_2K_1\in L^p_\Pp$, then $\varrho,\kappa\in L^p_\Pp$ and $\varphi(\cdot,x)\in L^p_\Pp$ for every $x\in V$.
	\end{enumerate}
\end{proposition}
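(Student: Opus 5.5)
I will prove the four items in order; the only delicate point is the localization of the minimizing labels in \ref{it:pl-b}.

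\emph{Item \ref{it:pl-a}.} Take $Q\in\mathcal Q$. Since $Q(\gph S)=1$, we have $\frac12\|G_\xi(x)-y\|^2\geq\varphi_\xi(x)$ for $Q$-a.e.\ $(\xi,y)$. The first marginal of $Q$ is $\Pp$, so integrating gives $\int \frac12\|G_\xi(x)-y\|^2\,dQ\geq\Intf{\varphi}(x)$.

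For attainment, I will use measurable selection. The map $\xi\mapsto\argmin_{y\in S(\xi)}\frac12\|G_\xi(x)-y\|^2$ is the set of projections of $G_\xi(x)$ onto $S(\xi)$. It is nonempty and closed-valued, and it is measurable by \cite[Theorem~14.37]{rockafellarwets1998}, so it admits a measurable selection $y(\xi)$. The pushforward $Q:=(\mathrm{id},y)_\#\Pp$ then belongs to $\mathcal Q$ and attains equality. Both sides are integrals of nonnegative functions, so they lie in $[0,+\infty]$. Normality of $\varphi$ follows from the measurable maximum theorem of Section~\ref{sec:preliminaries}, or equivalently because $\dist(\cdot,S(\xi))$ is Carath\'eodory composed with $G$.

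\emph{Item \ref{it:pl-b}.} First I localize the minimizers. For $x\in V$ we have $\|G_\xi(x)-G_\xi(\bar x)\|\leq\delta K_1$. If $y$ is a projection of $G_\xi(x)$ onto $S(\xi)$, then
\[
\|y-G_\xi(\bar x)\|\leq \dist(G_\xi(x),S(\xi))+\delta K_1\leq r_0+2\delta K_1=\tau .
\]
Hence the minimum over $S(\xi)$ equals the minimum over $C(\xi)$, and $C(\xi)\neq\emptyset$. The set $C(\xi)$ is compact as the intersection of a closed set with a closed ball. It is measurable as the intersection of $S$ with a measurable compact-valued ball mapping (\cite[Proposition~14.11]{rockafellarwets1998}).

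Next I bound the Hessian. We have $\nabla_x^2 g[u,u]=\|\nabla G_\xi(x)u\|^2+\langle G_\xi(x)-y,\nabla^2G_\xi(x)[u,u]\rangle$. For $y\in C(\xi)$ and $x\in V$, $\|G_\xi(x)-y\|\leq\delta K_1+\tau=r_0+3\delta K_1$, which gives $\|\nabla_x^2g\|\leq\varrho$ as in \eqref{eq:pl_rho}. Continuity of $g$, $\nabla_x g$ and $\nabla_x^2 g$ in $(y,x)$ is clear, so Definition~\ref{def:upperC2normal} is verified (the open set containing $V$ being the one where $G_\xi$ is $C^2$).

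\emph{Item \ref{it:pl-c}.} The function $\frac12 d^2$ is not globally Lipschitz, so I will argue directly. The distance $\dist(\cdot,S(\xi))$ is $1$-Lipschitz, and on $V$ it is bounded by $r_0+\delta K_1$. Writing $\frac12(a^2-b^2)=\frac12(a+b)(a-b)$ with $a,b\leq r_0+\delta K_1$ and $|a-b|\leq K_1\|x-x'\|$ gives exactly \eqref{eq:pl_kappa}.

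\emph{Item \ref{it:pl-d}.} These are H\"older estimates. We have $K_1^2\in L^p_\Pp$ and $K_1r_0\in L^p_\Pp$ by Cauchy--Schwarz from $L^{2p}_\Pp$. Together with the assumptions on $K_2r_0$ and $K_2K_1$, this gives $\varrho,\kappa\in L^p_\Pp$. Moreover $\varphi_\xi(\bar x)=\frac12 r_0^2\in L^p_\Pp$, so $|\varphi_\xi(x)|\leq\frac12r_0^2+\kappa\delta$ yields integrability on $V$ (alternatively, by Corollary~\ref{cor:lipschitz_from_integrable_values}).
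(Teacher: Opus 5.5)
Your proof is correct and follows the paper's argument essentially step by step: the marginal/support lower bound plus a measurable selection of projections and its pushforward in (a), the same localization $\|y-G_\xi(\bar x)\|\leq r_0(\xi)+2\delta K_1(\xi)$ and Hessian bound in (b), the same factorization $\frac12(a^2-b^2)=\frac12(a+b)(a-b)$ in (c), and H\"older in (d). Two small remarks: justify normality only via the Carath\'eodory argument, since the measurable maximum theorem of Section~\ref{sec:preliminaries} needs compact values and $S(\xi)$ is merely closed; and in (d) you bound $|\varphi_\xi(x)|$ through the Lipschitz estimate of (c), whereas the paper uses the direct bound $0\leq\varphi_\xi(x)\leq\frac12\big(r_0(\xi)+\delta K_1(\xi)\big)^2$, which is equally valid.
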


\begin{proof}
	For every $w\in\R^m$, the function $\xi\mapsto\dist(w,S(\xi))$ is measurable since $S$ is measurable with closed values \cite[Theorem~14.3]{rockafellarwets1998}. In particular, $r_0$ is measurable. As $w\mapsto\dist(w,S(\xi))$ is $1$-Lipschitz, $(\xi,w)\mapsto\dist(w,S(\xi))$ is Carath\'eodory, and so is its composition with the Carath\'eodory mapping $G$. Consequently, $\varphi$ is a finite-valued Carath\'eodory integrand, and thus a normal integrand \cite[Example~14.29]{rockafellarwets1998}. 

    We first show \ref{it:pl-a}. Fix $x\in\R^d$ and set $f_x(\xi,y):=\frac12\|G_\xi(x)-y\|^2$, which is Carath\'eodory and hence $\mathcal F\otimes\mathcal B(\R^m)$-measurable. Let $Q\in\mathcal Q$. Since $Q(\gph S)=1$, we have $f_x(\xi,y)\geq\varphi_\xi(x)$ for $Q$-almost every $(\xi,y)$, and since the $\Xi$-marginal of $Q$ is $\Pp$, it follows that
	\[
	\int f_x\,dQ\geq\int\varphi_\xi(x)\,Q\big(d(\xi,y)\big)=\Intf{\varphi}(x).
	\]
	Conversely, the mapping $\xi\mapsto\proj_{S(\xi)}(G_\xi(x))$ is the argmin mapping of the normal integrand $(\xi,y)\mapsto f_x(\xi,y)+\iota_{S(\xi)}(y)$, which is level-bounded in $y$. Hence it is measurable with nonempty closed values \cite[Theorem~14.37]{rockafellarwets1998} and admits a measurable selection $\hat y_x(\xi) \in \proj_{S(\xi)}(G_\xi(x))$ \cite[Corollary~14.6]{rockafellarwets1998}. The image $Q_x$ of $\Pp$ under $\xi\mapsto(\xi,\hat y_x(\xi))$ belongs to $\mathcal Q$, and
	\[
	\int f_x\,dQ_x=\int_\Xi \frac12\|G_\xi(x)-\hat y_x(\xi)\|^2\,\Pp(d\xi)=\Intf{\varphi}(x),
	\]
	which proves \eqref{eq:pl_dro_identity} together with the attainment of the infimum.
	
	Next, to show \ref{it:pl-b}, we first check that $C$ in \eqref{eq:pl_C} is a measurable set-valued mapping with nonempty compact values. The mapping $\xi\mapsto\B_{\tau(\xi)}[G_\xi(\bar x)]$ is measurable with closed values, because it is a level-set mapping of a normal integrand \cite[Proposition~14.33]{rockafellarwets1998}, and thus so is the intersection in \eqref{eq:pl_C} \cite[Proposition~14.11]{rockafellarwets1998}. Its values are closed and bounded, hence compact. Now fix $\xi\in\Xi$ and $x\in V$. Since $V$ is convex, \eqref{eq:pl_bounds_G} and the mean value inequality give
	\begin{equation}\label{eq:pl_mvt}
		\|G_\xi(x)-G_\xi(\bar x)\|\leq K_1(\xi)\|x-\bar x\|\leq\delta K_1(\xi).
	\end{equation}
	Let $y\in\proj_{S(\xi)}(G_\xi(x))$. As $\dist(\cdot,S(\xi))$ is $1$-Lipschitz, \eqref{eq:pl_mvt} yields
	\[
	\|y-G_\xi(\bar x)\|
	\leq \dist\big(G_\xi(x),S(\xi)\big)+\|G_\xi(x)-G_\xi(\bar x)\|
	\leq r_0(\xi)+2\|G_\xi(x)-G_\xi(\bar x)\|
	\leq\tau(\xi),
	\]
	that is, $\proj_{S(\xi)}(G_\xi(x))\subseteq C(\xi)$. In particular, by taking  $x=\bar x$, $C(\xi)\neq\emptyset$. Since $C(\xi)\subseteq S(\xi)$, we conclude that
	\begin{equation}\label{eq:pl_localized_min}
		\varphi_\xi(x)=\min_{y\in C(\xi)}g(\xi,y,x)
		\qquad\text{for all } x\in V,
	\end{equation}
	which is \eqref{eq:smooth_value_representation}. Moreover, the mapping $g$ is Carath\'eodory. Indeed, for each $\xi\in\Xi$ and $y\in\R^m$, the function $x\mapsto g(\xi,y,x)$ is twice-continuously differentiable on an open set containing $V$, such that
	\[
	\nabla_xg(\xi,y,x)=\nabla G_\xi(x)^\top\big(G_\xi(x)-y\big)
	\] and \[\nabla_x^2g(\xi,y,x)[u,u]=\|\nabla G_\xi(x)u\|^2+\big\langle G_\xi(x)-y,\nabla^2G_\xi(x)[u,u]\big\rangle ,
	\]
	and $g$, $\nabla_xg$ and $\nabla_x^2g$ are continuous in $(y,x)$. Moreover, for $y\in C(\xi)$ and $x\in V$, \eqref{eq:pl_mvt} gives
	\[
	\|G_\xi(x)-y\|\leq\|G_\xi(x)-G_\xi(\bar x)\|+\|G_\xi(\bar x)-y\|\leq\delta K_1(\xi)+\tau(\xi)=r_0(\xi)+3\delta K_1(\xi).
	\]
	Combining these estimates with \eqref{eq:pl_bounds_G}, for $\|u\|\leq1$,  we obtain
	\[
	\|\nabla_x^2g(\xi,y,x)\|\leq K_1(\xi)^2+K_2(\xi)\big(r_0(\xi)+3\delta K_1(\xi)\big)=\varrho(\xi)
	\quad\text{for all } y\in C(\xi),\ x\in V,
	\]
	which yields \eqref{eq:hessian_bound_g_lower}. Hence all the requirements of Definition~\ref{def:upperC2normal} are fulfilled, and $\varphi$ is an upper-$C^2$ normal integrand around $\bar x$ with curvature modulus \eqref{eq:pl_rho}.
	
	To show \ref{it:pl-c}, set $d_\xi(x):=\dist(G_\xi(x),S(\xi))$. By \eqref{eq:pl_mvt}, $d_\xi(x)\leq r_0(\xi)+\delta K_1(\xi)$ and $|d_\xi(x)-d_\xi(x')|\leq K_1(\xi)\|x-x'\|$ for all $x,x'\in V$. Therefore
	\[
	|\varphi_\xi(x)-\varphi_\xi(x')|=\frac12\,|d_\xi(x)-d_\xi(x')|\,\big(d_\xi(x)+d_\xi(x')\big)\leq\kappa(\xi)\|x-x'\| .
	\]
	
	We conclude by proving \ref{it:pl-d}. Since $K_1\in L^{2p}_\Pp$, we have $K_1^2\in L^p_\Pp$, and by H\"older's inequality $\|K_1r_0\|_p\leq\|K_1\|_{2p}\|r_0\|_{2p}$. Hence $\varrho=K_1^2+K_2r_0+3\delta K_2K_1$ and $\kappa=K_1r_0+\delta K_1^2$ belong to $L^p_\Pp$. Finally, $0\leq\varphi_\xi(x)\leq\frac12\big(r_0(\xi)+\delta K_1(\xi)\big)^2$ for all $x\in V$, and the right-hand side belongs to $L^p_\Pp$.
\end{proof}

\begin{remark}[Optimism versus pessimism]\label{rem:pl_pessimism}
	If $S$ has compact values, replacing the infimum in \eqref{eq:pl_dro_identity} by a supremum leads, by the same argument, to the integrand $\max_{y\in S(\xi)}\frac12\|G_\xi(x)-y\|^2$. This integrand is lower-$C^2$ by Definition~\ref{def:upperC2normal}, so pessimism and optimism over candidate labels correspond, respectively, to the lower- and upper-$C^2$ classes. When $S(\xi)$ is convex, as for interval-valued labels, the function $\frac12\dist(\cdot,S(\xi))^2$ is convex with a $1$-Lipschitz gradient. Then $\varphi_\xi$ is convex for linear models, and smooth whenever $G_\xi$ is.
\end{remark}

\begin{remark}[Descent property on active candidate labels]\label{rem:pl_descent}
By Corollary~\ref{cor:eqC2upper}\ref{it:eqC2u-b}, Proposition~\ref{prop:partial_label} yields the descent inequality
\[
\varphi_\xi(x')\leq\varphi_\xi(x)+\langle v,x'-x\rangle+\frac{\varrho(\xi)}{2}\|x'-x\|^2
\qquad\text{for all } x,x'\in V \text{ and all } v\in\partial\varphi_\xi(x),
\]
where, by the localized representation \eqref{eq:pl_localized_min} and  \cite[Theorem 10.31 ]{rockafellarwets1998} applied to $-\varphi_\xi$,
\begin{equation}\label{eq:pl_clarke}
	\partial\varphi_\xi(x)=\conv\Big\{\nabla G_\xi(x)^\top\big(G_\xi(x)-y\big):\ y\in\proj_{S(\xi)}\big(G_\xi(x)\big)\Big\}.
\end{equation}
In particular, the descent inequality holds for every choice of active candidate label. This is related to identification methods for partial-label learning \cite{cheng2023partial,zhang2017reshaped}, in which the updates first identify a candidate (or model) and then perform the optimization step.
\end{remark}

\begin{example}[Amplitude-based phase retrieval]\label{ex:amplitude_PR}
	Let $a:\Xi\to\R^d$ and $b:\Xi\to[0,+\infty)$ be measurable. The amplitude-based loss underlying the reshaped Wirtinger flow \cite{zhang2017reshaped} and the truncated amplitude flow \cite{wang2018solving} is
	\begin{equation}\label{eq:amplitude_loss}
		\varphi_\xi(x):=\frac12\big(|\langle a(\xi),x\rangle|-b(\xi)\big)^2 .
	\end{equation}
	This is the particular case of \eqref{eq:pl_integrand} with $m=1$, the linear model $G_\xi(x)=\langle a(\xi),x\rangle$, and the two-point candidate set $S(\xi):=\{-b(\xi),b(\xi)\}$. Indeed, $S$ is measurable with nonempty compact values, and since $b(\xi)\geq0$,
	\[
	\dist\big(u,\{-b(\xi),b(\xi)\}\big)=\min\big\{|u-b(\xi)|,|u+b(\xi)|\big\}=\big||u|-b(\xi)\big|
	\qquad\text{for all } u\in\R .
	\]By Proposition~\ref{prop:partial_label}, $\varphi$ is an upper-$C^2$ normal integrand around every point of $\R^d$, with $r_0(\xi)=\big||\langle a(\xi),\bar x\rangle|-b(\xi)\big|$, $\kappa(\xi)=\|a(\xi)\|\big(r_0(\xi)+\delta\|a(\xi)\|\big)$ and $\varrho(\xi)=\|a(\xi)\|^2$. By \eqref{eq:pl_clarke}, the Clarke subdifferential is
	\[
	\partial\varphi_\xi(x)=
	\begin{cases}
		\big\{\big(\langle a,x\rangle-b\,\mathrm{sgn}\langle a,x\rangle\big)a\big\}, & \langle a,x\rangle\neq0,\\[1mm]
		[-b,b]\,a, & \langle a,x\rangle=0,
	\end{cases}
	\]
	where the dependence on $\xi$ is omitted. Consequently, the descent inequality
	\[
	\varphi_\xi(x')\leq\varphi_\xi(x)+\langle v,x'-x\rangle+\frac{\|a(\xi)\|^2}{2}\|x'-x\|^2
	\]
	holds for all $x,x'\in\R^d$ and every $v\in\partial\varphi_\xi(x)$. In particular, it holds for the update direction of the reshaped Wirtinger flow \cite[Algorithm 1]{zhang2017reshaped}. The update rule in \cite[Algorithm 1]{wang2018solving} of the truncated amplitude flow acts on these subgradients, by discarding some samples at each iteration.  
\end{example}

 \subsection{Conditional value-at-risk}

The conditional value-at-risk (CVaR), introduced in \cite{rockafellar2000optimization}
(see also \cite{rockafellar2002conditional, acerbi2002coherence}), is a risk measure
that quantifies the expected loss in the tail of a distribution. Let $\alpha\in(0,1)$
be a confidence level and let $L:\Xi\times\R^d\to\R$ be a finite-valued normal
integrand, where $L_\xi(x)$ represents the loss incurred by the decision $x\in\R^d$
under the scenario $\xi\in\Xi$, with $L(\cdot,x)\in L^1_\Pp$. Roughly speaking, the
CVaR of $L_\xi(x)$ is the average of the worst $(1-\alpha)$-fraction of the losses.
When the distribution of $L_\xi(x)$ is continuous, it coincides with the conditional
expectation
\[
\mathrm{CVaR}_\alpha\bigl(L_\xi(x)\bigr)
  = \E\bigl[L_\xi(x)\,\big|\,L_\xi(x)\ge \mathrm{VaR}_\alpha\bigl(L_\xi(x)\bigr)\bigr],
\]
where $\mathrm{VaR}_\alpha$ denotes the value-at-risk, that is, the $\alpha$-quantile
of the loss. Unlike the VaR, the CVaR is a coherent risk measure in the sense of
\cite{artzner1999coherent}, and it interpolates between the expected loss
($\alpha\to 0$) and the worst-case loss ($\alpha\to 1$).

The CVaR has found applications in a wide range of fields. In finance, where it
originated, it is used for portfolio selection and hedging
\cite{rockafellar2000optimization}, and it has replaced the VaR as the reference
measure of market risk in the Basel regulatory framework; see
\cite{mcneil2015quantitative} for a comprehensive account. In stochastic programming
it is the prototypical coherent risk measure for risk-averse formulations
\cite{MR4362585}, and CVaR constraints provide a convex conservative
approximation of chance constraints, the tightest one within a natural class
\cite{nemirovski2006convex}. More recently, the CVaR has been adopted in machine
learning as a training criterion that focuses on the hardest examples or
subpopulations rather than on the average loss, owing to its dual representation as
a worst-case expectation over distributions with bounded likelihood ratio
\cite{duchi2021learning}, and in risk-sensitive reinforcement learning and control
\cite{chow2017risk}. In most of these applications the CVaR enters either as an
objective or as a constraint, so that its tractability for optimization is
essential.

A key feature of the CVaR is that it admits a variational formulation
\cite{rockafellar2000optimization}, which makes it amenable to optimization. It can
be equivalently described by
\[
\mathrm{CVaR}_\alpha\bigl(L_\xi(x)\bigr)=\min_{t\in\R}\,\Intf{\psi}(x,t),
\]
with
\begin{equation}\label{eq:psiCvAR}
\psi_\xi(x,t):=t+\frac{1}{1-\alpha}\bigl(L_\xi(x)-t\bigr)_+ ,\qquad (x,t)\in\R^{d+1},
\end{equation}
where $(s)_+:=\max\{s,0\}$. Moreover, the value-at-risk $\mathrm{VaR}_\alpha\bigl(L_\xi(x)\bigr)$
is a minimizer of the inner problem in $t$ (the left endpoint of the set of minimizers).
Consequently, minimizing the CVaR over $x$ reduces to the joint stochastic problem
\begin{equation}\label{eq:CvAR}
\min_{(x,t)\in\R^{d+1}} \Intf{\psi}(x,t).
\end{equation}
 The next proposition shows that the integrand $\psi$ in \eqref{eq:CvAR} inherits
the lower-$C^2$ structure and the Lipschitz continuity of the loss $L$. It is a
direct consequence of Proposition~\ref{prop:composite_monotone}.

\begin{proposition}\label{prop:cvar}
Let $p\in[1,\infty]$, let $L:\Xi\times\R^d\to\R$ be a finite-valued normal
integrand, and let $\bar x\in\R^d$ be such that $L$ is lower-$C^2$ around $\bar x$
with $p$-integrable curvature modulus. Then the integrand $\psi$ defined in
\eqref{eq:psiCvAR} is a finite-valued normal integrand on $\Xi\times\R^{d+1}$ which
is lower-$C^2$ around $(\bar x,\bar t)$ with $p$-integrable curvature modulus, for
every $\bar t\in\R$. If, in addition, $L$ is locally Lipschitz around $\bar x$ with
$p$-integrable modulus, then $\psi$ is locally Lipschitz around $(\bar x,\bar t)$
with $p$-integrable modulus, for every $\bar t\in\R$.
\end{proposition}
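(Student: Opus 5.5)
We plan to obtain the result from Proposition~\ref{prop:composite_monotone} with $m=2$, working on the product space $\R^{d+1}$ with variable $(x,t)$. Write
\[
\psi_\xi(x,t)=f_\xi\bigl(g_1(\xi,x,t),g_2(\xi,x,t)\bigr),\qquad
f_\xi(z_1,z_2):=z_2+\tfrac{1}{1-\alpha}(z_1-z_2)_+ ,
\]
with $g_1(\xi,x,t):=L_\xi(x)$ and $g_2(\xi,x,t):=t$. This choice of $f$ does not work as stated, since $f$ is not nondecreasing in $z_2$ (the map $z_2\mapsto z_2-\frac{1}{1-\alpha}(z_1-z_2)$ decreases on $\{z_1>z_2\}$). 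We therefore move the $t$-dependence into the inner map and take instead $m=1$, with
\[
g(\xi,x,t):=L_\xi(x)-t,\qquad f_\xi(s):=\tfrac{1}{1-\alpha}s_+ ,
\]
and add the smooth term $t$ separately. The outer function $f$ is a deterministic convex, nondecreasing function that is Lipschitz on $\R$ with constant $M\equiv\frac{1}{1-\alpha}\in L^\infty_\Pp$. This lets us take $q=p$ and $r=p$ in Proposition~\ref{prop:composite_monotone}.

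The first step is to verify hypothesis (i) of that proposition for $g$ around $(\bar x,\bar t)$. Since $L$ is lower-$C^2$ around $\bar x$, Definition~\ref{def:upperC2normal} gives $V$, $C$, $X$, $\tilde g$ and $\varrho\in L^p_\Pp$. We then set $\hat V:=V\times[\bar t-1,\bar t+1]$, which is a compact convex neighborhood of $(\bar x,\bar t)$, and
\[
\hat g(\xi,s,(x,t)):=\tilde g(\xi,s,x)-t .
\]
This $\hat g$ is Carath\'eodory and $C^2$ in $(x,t)$, with Hessian equal to $\mathrm{diag}(\nabla_x^2\tilde g,0)$, whose norm is at most $\varrho(\xi)$. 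Hence $g$ is lower-$C^2$ around $(\bar x,\bar t)$ with the same modulus. Local Lipschitz continuity of $g$ on a neighborhood of $\hat V$ follows from (a)$\Rightarrow$(b) in Theorem~\ref{thm:eqC2lower} applied to $L$, possibly after shrinking $V$. Normality of $g$ is clear: it is the sum of a normal integrand and a continuous deterministic function, so it is Carath\'eodory on the region where $L_\xi$ is continuous.

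Applying Proposition~\ref{prop:composite_monotone} then shows that $f\circ g$ is a lower-$C^2$ normal integrand around $(\bar x,\bar t)$ with curvature modulus $\varrho/(1-\alpha)\in L^p_\Pp$. The remaining summand $(x,t)\mapsto t$ is linear, and adding it preserves inequality \eqref{eq:lowerC2_ascent}: the Clarke sum rule with a smooth term gives $\partial\psi_\xi=\partial(f\circ g)_\xi+(0,1)$, and the linear term is exact in the inequality. Theorem~\ref{thm:eqC2lower}, (b)$\Rightarrow$(a), then yields the lower-$C^2$ property of $\psi$ with the same modulus. Alternatively, one can absorb the linear term into the representation \eqref{eq:quadratic_affine_representation_lower} by shifting $a$ by $(0,1)$.

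For the Lipschitz statement, suppose $L_\xi$ is $\kappa(\xi)$-Lipschitz on $V$ with $\kappa\in L^p_\Pp$. Then $g$ is Lipschitz on $\hat V$ with modulus $\kappa+1$, so the last part of Proposition~\ref{prop:composite_monotone} gives modulus $(\kappa+1)/(1-\alpha)$ for $f\circ g$. Adding $1$ for the linear term shows that $\psi$ is Lipschitz with modulus $1+(\kappa+1)/(1-\alpha)\in L^p_\Pp$, using that $\Pp$ is a probability measure so constants lie in $L^p_\Pp$.

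The main obstacle is the first step above: checking that the lifted integrand $g$ satisfies (i) exactly as Proposition~\ref{prop:composite_monotone} requires, and in particular choosing the decomposition so that the outer function is monotone. The minor points are that the Hessian bound survives the lift in the operator norm, and that measurability of $\psi$ follows from \cite[Proposition~14.45]{rockafellarwets1998}.
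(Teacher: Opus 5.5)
Your proposal is correct and takes the paper's route: the paper obtains Proposition~\ref{prop:cvar} as a direct consequence of Proposition~\ref{prop:composite_monotone}, composing a nondecreasing convex function with the (lifted) nonsmooth loss. You correctly observe that the naive two-component decomposition fails monotonicity, and your fix ($f(s)=\frac{1}{1-\alpha}s_+$ composed with $L_\xi(x)-t$, with the linear term $t$ added back via the smooth sum rule or by shifting $a$ by $(0,1)$) is valid; the term $t$ could equally be absorbed by taking $m=2$, $g_2(\xi,x,t)=t$ and the outer function $(z_1,z_2)\mapsto z_2+\frac{1}{1-\alpha}(z_1)_+$, which is nondecreasing in both arguments.
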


\section{Concluding comments}\label{sec:conclusion}

The preceding results show that lower-$C^2$ normal integrands admit a simple local representation as upper envelopes of random quadratic-affine functions with integrable curvature, and, by a change of sign, that upper-$C^2$ normal integrands are the corresponding lower envelopes. This representation is suitable for stochastic programming, since it is stable under compact-recourse minimization and provides a direct route from samplewise second-order estimates to expected-value estimates. A natural next step is to combine this structure with algorithmic frameworks for nonsmooth stochastic optimization, where the integrable curvature modulus and square-integrable Lipschitz moduli can be used to control model errors and sample-average approximations.

\section*{Acknowledgements}

Research of the authors was supported by Centro de Modelamiento Matem\'atico (CMM), ACE210010 and FB210005, BASAL funds for centers of excellence. The second author was also supported by ANID-Chile grants Fondecyt Regular 1240120, Fondecyt Regular 1240335 and Fondecyt Regular 1261728. The fourth author was  partially supported by grant PID2025-168246NB-I00 funded by
ERDF/EU and by MICIU/AEI/10.13039/501100011033. The last two authors were partially supported by grant CIGE/2026/80 by Conselleria d'Educació, Cultura i Universitats, Generalitat Valenciana.
\bibliographystyle{siam}
\bibliography{references}

\end{document}